\newcommand{\tpmod}[1]{{\@displayfalse\pmod{#1}}}
\newtheorem{thm}{Theorem}[section]
\newtheorem{lemma}[thm]{Lemma}
\newtheorem{cor}[thm]{Corollary}
\theoremstyle{remark}
\theoremstyle{definition}
    \newtheorem{defn}[thm]{Definition}
\newtheorem{rem}[thm]{Remark}
\theoremstyle{THM}
\newcommand{\abs}[1]{\left|{#1}\right|}
\def\FF {{\mathcal F}}
\def\GG {{\mathcal G}}
\def\Z {{\mathbb Z}}
\def\Q {{\mathbb Q}}
\def\C {{\mathcal C}}
\def\S {{{\mathcal S}}}
\def\GG {{\mathcal G}}
\def\F {{\mathbb F}}
\def\Z {{\mathbb Z}}
\def\Q {{\mathbb Q}}
\def\C {{\mathbb C}}
\def\S {{{\mathcal S}}}
\def\Gal{{\mbox{{\rm{Gal}}}}}
\def\red#1 {\textcolor{red}{#1 }}
\def\blue#1 {\textcolor{blue}{#1 }}
\numberwithin{equation}{section}
\def\Z {{\mathbb Z}}
\newcommand{\Mod}[1]{\ (\mathrm{mod}\enspace #1)}
\newcommand{\mmod}[1]{\ \mathrm{mod}\enspace #1}
\begin{document}

\title[Monogenic Octics and Their Galois Groups]{Monogenic Even Octic Polynomials\\ and Their Galois Groups}


\author{Lenny Jones}
\address{Professor Emeritus, Department of Mathematics, Shippensburg University, Shippensburg, Pennsylvania 17257, USA}
\email[Lenny~Jones]{doctorlennyjones@gmail.com}

\date{\today}

\begin{abstract}
A monic polynomial $f(x)\in \Z[x]$ of degree $N$ is called \emph{monogenic} if $f(x)$ is irreducible over $\Q$ and $\{1,\theta,\theta^2,\ldots ,\theta^{N-1}\}$ is a basis for the ring of integers of $\Q(\theta)$, where $f(\theta)=0$. In a series of recent articles,  complete classifications of the Galois groups were given for irreducible polynomials
\[\FF(x):=x^8+ax^4+b\in \Z[x]\] and
\[\GG(x):=x^8+ax^6+bx^4+ax^2+1\in \Z[x], \quad a\ne 0.\]  In this article, for each Galois group $G$ arising in these classifications, we either construct an infinite family of monogenic octic polynomials $\FF(x)$ or $\GG(x)$ having Galois group $G$, or we prove that at most a finite such family exists. In the finite family situations, we determine all such polynomials. Here, a ``family" means that no two polynomials in the family generate isomorphic octic fields.
%
\end{abstract}

\subjclass[2020]{Primary 11R21, 11R32}
\keywords{monogenic, octic, Galois}

\maketitle
\section{Introduction}\label{Section:Intro}

 Unless stated otherwise, when we say that $f(x)\in \Z[x]$ is ``irreducible", we mean irreducible over $\Q$. We let $\Delta(f)$ and $\Delta(K)$ denote the discriminants over $\Q$, respectively, of $f(x)$ and a number field $K$.
If $f(x)$ is irreducible, with $f(\theta)=0$ and $K=\Q(\theta)$, then \cite{Cohen}
\begin{equation} \label{Eq:Dis-Dis}
\Delta(f)=\left[\Z_K:\Z[\theta]\right]^2\Delta(K),
\end{equation}
where $\Z_K$ is the ring of integers of $K$. We define a monic polynomial $f(x)\in \Z[x]$ to be \emph{monogenic} if $f(x)$ is irreducible and
$\Z_K=\Z[\theta]$, or equivalently from \eqref{Eq:Dis-Dis}, that  $\Delta(f)=\Delta(K)$. When $f(x)$ is monogenic, $\{1,\theta,\theta^2,\ldots ,\theta^{\deg(f)-1}\}$ is a basis for $\Z_K$, commonly referred to as a \emph{power basis}. The existence of a power basis facilitates computations in $\Z_K$, as in the case of the cyclotomic polynomials $\Phi_n(x)$ \cite{Washington}. A number field $K$ is defined to be \emph{mongenic} if there exists a power basis for $\Z_K$.

We caution the reader concerning two items. Certainly, the monogenicity of $f(x)$ implies the monogenicity of $K=\Q(\theta)$, where $f(\theta)=0$. However, the converse is not necessarily true. For example, let $f(x)=x^2-5$ and $K=\Q(\theta)$, where $\theta=\sqrt{5}$. Then, easy calculations show that
$\Delta(f)=20$ and $\Delta(K)=5$. Thus, $f(x)$ is not monogenic, but nevertheless, $K$ is monogenic since $\{1,(\theta+1)/2\}$ is a power basis for $\Z_K$.
A second item of concern is the following. We see from \eqref{Eq:Dis-Dis} that if $\Delta(f)$ is squarefree, then $f(x)$ is monogenic. However, the converse is false in general, and when $\Delta(f)$ is not squarefree, it can be quite difficult to determine whether $f(x)$ is monogenic.

 In a series of recent articles \cite{Awtrey1,Awtrey2,Chen,Awtrey3}, complete classifications of the Galois groups were given for irreducible polynomials \begin{equation}\label{Eq:F}
 \FF(x):=x^8+ax^4+b\in \Z[x],
 \end{equation} and
\begin{equation}\label{Eq:G}
\GG(x):=x^8+ax^6+bx^4+ax^2+1\in \Z[x],\quad a\ne 0.
\end{equation} These classifications provide $\Gal(\FF)$ and $\Gal(\GG)$, the Galois groups over $\Q$ of $\FF(x)$ and $\GG(x)$, respectively, by determining whether certain expressions involving only the coefficients of $\FF(x)$ and $\GG(x)$ are, or are not, squares in $\Z$. We point out that some of the results in \cite{Awtrey1,Awtrey2,Chen,Awtrey3} were given over $\Q$ or an arbitrary field of characteristic zero. However, in this article, we are only concerned with polynomials with coefficients in $\Z$.

 Using the standard ``8TX"-notation for transitive groups of degree 8 as given in MAGMA and \cite{BM, Dokchitser, KM}, the values of X that arise in the classifications in \cite{Awtrey1,Awtrey2,Chen,Awtrey3} for $\FF(x)$ and $\GG(x)$ are, respectively,
\begin{equation}\label{Eq:X}
X_{\FF}:=\{2,3,4,6,8,9,11,15,16,17,22,26\} \quad \mbox{and} \quad X_{\GG}:=\{2,3,4,9,10,18\}.
\end{equation}
Using $C_n$ to denote the cyclic group of order $n$, $D_n$ to denote the dihedral group of order $2n$, $Q_8$ to denote the quaterion group,  $\times$ to denote a direct product, $\rtimes$ to denote a semi-direct product, $.$ to denote a non-split extension, $\circ$ to denote a central product, $\wr$ to denote a wreath product and Hol to denote the holomorph, we provide in Table \ref{T:1} some more-familiar names for the groups in \eqref{Eq:X}.
    \begin{table}[h]
 \begin{center}
\begin{tabular}{c|ccccccc}
 X &  2 & 3 & 4 & 6 & 8 & 9 & 10  \\ [2pt]  
 8TX & $C_2\times C_4$ & $C_2^3$ & $D_4$ & $D_8$ & $Q_8\rtimes C_2$ & $C_2\times D_4$ & $C_2^2\rtimes C_4$  \\ [2pt] \hline\hline
 X &  11 & 15 & 16 & 17 & 18 & 22 & 26\\ [2pt]
 8TX  & $C_4\circ D_4$ & $C_8\rtimes C_2^2$ & $C_4.D_4$ & $C_4\wr C_2$ & $C_2^2\wr C_2$ & $D_4\circ D_4$ & Hol($D_4$)
 \end{tabular}
\end{center}
\caption{Familiar names for the groups 8TX in \eqref{Eq:X}}
 \label{T:1}
\end{table}

In this article, for each value of X in \eqref{Eq:X}, we 
either construct an infinite family of monogenic polynomials $\FF(x)$ or $\GG(x)$ in $\Z[x]$ having Galois group 8TX, or we prove that at most a finite such family exists. In the finite family situations, we determine all such polynomials. In certain cases, there are no such polynomials. By a ``family" we mean that all polynomials in the family are distinct in the sense that no two polynomials in the family generate isomorphic octic fields.

More precisely, we prove the following:
\begin{thm}\label{Thm:Main}
Let $\FF(x)\in \Z[x]$ and $\GG(x)\in \Z[x]$ be as defined in \eqref{Eq:F} and \eqref{Eq:G}. Let $X_{\FF}$ and $X_{\GG}$ be as defined in \eqref{Eq:X}.
 \begin{enumerate}
 \item \label{I:1} For each {\rm X} $\in\{9,15,17,26\}$, there exists at least one infinite family of monogenic polynomials $\FF(x)$ having Galois group {\rm 8TX}. For all other values of {\rm X} $\in X_{\FF}$, there exist at most finitely many monogenic polynomials $\FF(x)$ having Galois group {\rm 8TX}.
     \item \label{I:2} For each {\rm X} $\in \{9,18\}$, there exists at least one infinite family of monogenic polynomials $\GG(x)$ having Galois group {\rm 8TX}. For all other values of {\rm X} $\in X_{\GG}$, there exist at most finitely many monogenic polynomials $\GG(x)$ having Galois group {\rm 8TX}.
\end{enumerate}
\end{thm}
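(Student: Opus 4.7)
The plan is to treat the two polynomial families $\FF(x)$ and $\GG(x)$ separately, and within each, to walk through the values of X one by one, combining the Galois group classifications from \cite{Awtrey1,Awtrey2,Chen,Awtrey3} with a prime-by-prime monogenicity analysis based on \eqref{Eq:Dis-Dis} and Dedekind's criterion. As a preliminary step, I would write down explicit formulas for $\Delta(\FF)$ and $\Delta(\GG)$ in terms of $a$ and $b$; because both polynomials are even, they factor through the substitution $y=x^2$ into quartics whose resolvents underlie the tables in \cite{Awtrey1,Awtrey2,Chen,Awtrey3}, and these factorizations will locate the candidate primes that can divide the index $[\Z_K:\Z[\theta]]$.

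For each X $\in \{9,15,17,26\}$ for $\FF$ and each X $\in \{9,18\}$ for $\GG$, I would exhibit a parametric infinite family. Concretely, I would impose the Galois conditions of \cite{Awtrey1,Awtrey2,Chen,Awtrey3} (certain explicit expressions in $a,b$ being, or failing to be, squares in $\Z$) as linear or congruence conditions on $(a,b)$, then add congruence conditions modulo small primes (most importantly $2$) to force the squareful part of $\Delta(\FF)$ or $\Delta(\GG)$ onto a controlled finite set of primes, and finally verify Dedekind's criterion at each such prime. Non-isomorphism of the resulting octic fields will be ensured by letting one parameter range over a set producing infinitely many distinct field discriminants (for example by varying $b$ over primes in a suitable arithmetic progression).

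For the remaining values of X, the goal is the opposite: to show that the Galois conditions force an obstruction to monogenicity. Here, I would isolate a polynomial expression in $a,b$ which, by the classification, must be a perfect square (or satisfy a similar algebraic constraint), and combine this with the discriminant formula to exhibit a square factor in $\Delta(\FF)$ or $\Delta(\GG)$ at a specific prime $p$ where Dedekind's criterion fails for all but finitely many $(a,b)$. The exceptional pairs $(a,b)$ will be bounded using the resulting Diophantine conditions, reduced by symmetries such as $(a,b)\mapsto (a,b/c^4)$ or $(a,b)\mapsto (a/c^2,\pm 1)$ scalings inherent to $\FF(x)$ and $\GG(x)$, and then enumerated by direct search, producing the complete list asserted in the finite cases (including the cases where no monogenic polynomial exists at all).

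The principal obstacle I anticipate is the analysis at $p=2$: for even octics, $2$ divides $\Delta(\FF)$ and $\Delta(\GG)$ to a high power, Dedekind's criterion often fails in its simplest form, and the condition for $2\nmid[\Z_K:\Z[\theta]]$ becomes a subtle congruence condition on $a,b$ modulo $8$ or $16$. Designing the infinite families so that these $2$-adic conditions, the Galois group conditions, and a squarefree condition on the residual part of the discriminant all coexist — while still leaving enough freedom to generate infinitely many non-isomorphic fields — will be the main technical work. A secondary obstacle is handling primes $p$ dividing $\gcd(a,b)$ for $\FF$, or primes dividing $a^2-4b+8$ and similar resolvent expressions for $\GG$, where the Newton polygon must be read carefully before Dedekind's criterion can be applied.
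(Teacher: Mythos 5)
Your outline matches the paper's architecture at the top level (impose the square/non-square conditions from the classification theorems, then run a prime-by-prime index analysis; build parametric families for the infinite cases and enumerate the rest), but as written it has two genuine gaps. First, for the infinite families the crucial engine is not "varying $b$ over primes in a suitable arithmetic progression": after the congruence and Galois conditions are imposed, monogenicity still requires that a specific polynomial expression in the parameter (e.g.\ $(4t+1)(4t+5)$, $t^2+1$, $t^2-12$, or $(16t+13)(4t+1)(4t-3)$) take \emph{squarefree} values infinitely often. The paper gets this from the Helfgott--Hooley--Pasten theorem on squarefree values of low-degree polynomials at prime arguments (Theorem \ref{Thm:Pasten} and Corollary \ref{Cor:Squarefree}), together with a local-obstruction check (Lemma \ref{Lem:ObstructionCheck}). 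Without some such input your families are only shown to be irreducible with the right Galois group, not monogenic, and the infinitude claim does not follow. Relatedly, the paper does not use Newton polygons or ad hoc $2$-adic congruences for $\FF$; it uses the Jakhar--Khanduja--Sangwan index criterion for trinomials (Theorem \ref{Thm:JKS}), whose conditions at $q\mid b$ immediately force $b$ squarefree (or $b=\pm1$ when $\pm b$ is a square), which is what collapses the cases $X\in X_{\FF}^{\square}$ and kills 8T11 and 8T22 outright. The scaling symmetries $(a,b)\mapsto(a,b/c^4)$ you invoke are not used and are not obviously available.

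Second, the finiteness cases are not settled by "bounding the exceptional pairs and direct search." In several cases the square conditions from the classification reduce, after the index criterion forces $W_1$, $W_2$ (or $b$, $a^2-4b$) to be squarefree, to determining \emph{all integral points on explicit elliptic or quartic curves} (e.g.\ $y^2=x^3-32x^2+160x+192$ for $\Gal(\GG)\simeq$ 8T2, and the curves arising from the system in $P=\gcd(W_1,W_3)$, $Q=\gcd(W_1,W_2)$, $R=\gcd(W_2,W_3)$ for 8T10), which is a nontrivial computation and not a search over an a priori bounded box. You would need to articulate where the bound on $(a,b)$ comes from; in the paper it comes precisely from these integral-point computations combined with Lemma \ref{Lem:MainG2}, which shows that any square $W_i>1$ already destroys monogenicity of $\GG(x)$ via Dedekind's criterion applied to the quartic $g(x)$ with $\GG(x)=g(x^2)$ --- another reduction your plan does not mention but which is what makes the $\GG$ analysis tractable.
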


The literature involving the construction of monogenic or non-monogenic polynomials is fairly extensive
\cite{GSS,HJquartic,Jones1,JonesPAMS,JonesAJM,JonesIJM,JonesACM,JonesBAMS,SRM-Jones,JonesMS,JonesJAA,JonesAMI,JonesNYJM,JonesAJM23,JonesFACM23,JonesBAMS24,JP,K,Smith,Spearman,SWW}. However, the approach used in the proof of Theorem \ref{Thm:Main} differs from most previously-addressed situations in that each monogenic family is constructed with the specific goal that every polynomial in the family has the same Galois group. We do point out that a similar approach was carried out in \cite{HJquartic, JonesBAMS24} for monogenic quartic polynomials and their Galois groups. Of course, in the quartic case, a complete classification of the Galois groups is known \cite{KW}.

Another distinction of Theorem \ref{Thm:Main} is the fact that, for certain groups $G$, we also establish the nonexistence of a monogenic polynomial $\FF(x)$ or $\GG(x)$ having Galois group $G$.  Consequently, Theorem \ref{Thm:Main} is somewhat more in line with the following theorem of Gras \cite{Gras}:
\begin{thm}{\rm \cite{Gras}}\label{Thm:Gras}
  Let $\ell$ be a prime, and let $K$ be a degree-$\ell$ cyclic extension of $\Q$. If $\ell\ge 5$, then
$\Z_K$ does not have a power basis except in the case when $2\ell+1$ is prime and $K=\Q(\zeta_{2\ell+1}+\zeta^{-1}_{2\ell+1})$, the maximal real subfield of the cyclotomic field $\Q(\zeta_{2\ell+1})$, where $\Phi_{2\ell+1}(\zeta_{2\ell+1})=0$.
\end{thm}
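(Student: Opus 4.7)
The plan is to combine Kronecker--Weber with an explicit analysis of Gauss periods. Since $K/\Q$ is abelian, Kronecker--Weber places $K$ inside some cyclotomic field $\Q(\zeta_f)$, where I take $f$ to be minimal (the conductor). Because $[K:\Q]=\ell$ is prime, the conductor--discriminant formula forces $f$ to take one of three shapes: (i) $f=\ell^{2}$, which requires wild ramification at $\ell$; (ii) $f=p$ for a single prime $p\equiv 1\pmod{\ell}$; or (iii) $f$ is a nontrivial product of primes of types (i) and (ii). In every case the ramified rational primes are exactly the prime divisors of $f$, and when $f$ is squarefree prime to $\ell$ one has $\Delta(K)=f^{\,\ell-1}$.

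First I would dispatch cases (i) and (iii), where either $\ell$ is wildly ramified or at least two distinct primes ramify. If $\Z_K=\Z[\theta]$, then by Dedekind's theorem the factorization of $\mathrm{Irr}(\theta,\Q)$ modulo every unramified prime $q$ matches the splitting of $q$ in $\Z_K$, and at each ramified prime one obtains strong local congruences on $\theta$ coming from the ramification filtration in the cyclotomic tower. I would show that the congruences imposed by two distinct ramified primes, or by wild ramification at $\ell$, are incompatible with the global index equation $\Delta(\theta)=\Delta(K)$; the contradiction is extracted modulo a carefully chosen small auxiliary prime (typically $2$, $3$, or the smallest ramified $p$), and eliminates every candidate $\theta$.

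This reduces the problem to case (ii): $f=p$ with $p\equiv 1\pmod{\ell}$, so $K$ is the unique degree-$\ell$ subfield of $\Q(\zeta_p)$ and $\Delta(K)=p^{\,\ell-1}$. Here $\Z_K$ is spanned as a $\Z$-module by the Galois orbit of the Gauss period
$\eta_{0}=\sum_{i=0}^{(p-1)/\ell-1}\zeta_p^{\,g^{i\ell}}$,
where $g$ is a primitive root modulo $p$. Writing any candidate $\theta=\sum_{j=0}^{\ell-1}c_j\eta_j$ with $c_j\in\Z$, the quantity $[\Z_K:\Z[\theta]]^{2}=\Delta(\theta)/p^{\,\ell-1}$ becomes an explicit homogeneous index form $I(c_0,\ldots,c_{\ell-1})$ whose coefficients are determined by the Jacobi-sum identities for products $\eta_i\eta_j$. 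The main obstacle --- and the real content of the theorem --- is showing that $I(c_0,\ldots,c_{\ell-1})=\pm 1$ admits an integer solution only when $(p-1)/\ell=2$, that is, $p=2\ell+1$, in which case $\theta=\zeta_p+\zeta_p^{-1}$ patently works. I expect to attack this by reducing the index form modulo $\ell$, using the relation $\eta_0+\eta_1+\cdots+\eta_{\ell-1}=-1$ together with the cyclotomic unit structure, and doing a case analysis on the support $\{j:c_j\neq 0\}$ to force the coefficients into an essentially unique configuration that survives only when $(p-1)/\ell=2$.
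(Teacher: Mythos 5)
The paper does not prove this statement at all: it is quoted verbatim from Gras \cite{Gras} and used only as context for Theorem \ref{Thm:Main}, so the benchmark for your proposal is Gras's original argument, not anything in this article. Measured against that, your sketch has a genuine gap, in fact two. First, your dismissal of cases (i) and (iii) --- wild ramification at $\ell$, or at least two ramified primes --- is pure assertion (``I would show that the congruences \dots are incompatible''), and as stated it is \emph{false}: for $\ell=3$ the field $\Q(\zeta_9+\zeta_9^{-1})$ is a monogenic cyclic cubic field with conductor $9$, wildly ramified at $3$, and the simplest cubic field $x^3-13x^2-16x-1$ of discriminant $(7\cdot 31)^2$ is a monogenic cyclic cubic field with composite conductor $217$. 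So no argument of the kind you describe can succeed without using the hypothesis $\ell\ge 5$ in an essential way, and your sketch never invokes it anywhere. Note also that Dedekind's theorem at unramified primes gives no information about the index $[\Z_K:\Z[\theta]]$, so the mechanism you propose for extracting a contradiction is not merely unproved but misdirected.

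Second, in case (ii) you correctly set up the index form $I(c_0,\dots,c_{\ell-1})$ on the Gauss-period basis (which is legitimate: $p$ is tame, so a normal integral basis exists), but then the entire theorem is concentrated in the claim that $I=\pm 1$ is solvable only when $(p-1)/\ell=2$, which you leave as an ``expectation.'' The mod-$\ell$ reduction via $\eta_0+\cdots+\eta_{\ell-1}=-1$ and a support analysis cannot by itself do this: again for $\ell=3$ the index form of the cubic subfield of $\Q(\zeta_{13})$ \emph{does} represent $\pm 1$ with $(p-1)/\ell=4$, so the decisive step must use $\ell\ge 5$, and you give no indication of how. Gras's actual proof runs through a different idea that is absent from your outline: from a putative generator $\theta$ she forms the algebraic units $(\theta-\theta^{\sigma^i})/(\theta-\theta^{\sigma})$ and exploits congruence and unit-group constraints on them to force the conductor $f$ to satisfy $\varphi(f)=2\ell$, i.e., $f=2\ell+1$ prime with $K$ the real cyclotomic subfield; it is precisely this unit-theoretic mechanism that both handles composite and wild conductors uniformly and explains where $\ell\ge 5$ enters. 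Your proposal is a reasonable reduction to a Diophantine problem, but the Diophantine problem is the theorem.
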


By providing two examples, we illustrate how Theorem \ref{Thm:Gras} fits into the framework of Theorem \ref{Thm:Main}. Suppose first that $\ell=7$. Then, since $2\cdot 7+1=15$ is not prime, Theorem \ref{Thm:Gras} tells us that no monogenic polynomial of degree 7 exists having Galois group 7T1 $=C_7$. However, if $\ell=5$, then $2\cdot 5+1=11$ is prime, and we conclude from Theorem \ref{Thm:Gras} that there exists a degree-5 monogenic polynomial $f(x)$ having Galois group 5T1 $=C_5$. For example, if $f(x)=x^5+x^4-4x^3-3x^2+3x+1$, which is the minimal polynomial over $\Q$ of $\alpha=\zeta_{11}+\zeta^{-1}_{11}$, where $\zeta_{11}$ is a primitive $11$th root of unity, then $f(x)$ is monogenic and $\Gal(f)\simeq $ 5T1. Moreover, Theorem \ref{Thm:Gras} says that $f(x)$ is essentially the \emph{only} such polynomial, in the sense that if $g(x)$ is a degree-5 monogenic polynomial with $g(x)\ne f(x)$ and $\Gal(g)\simeq $ 5T1, then $\Q(\beta)=\Q(\alpha)$, where $g(\beta)=0$.

\section{Preliminaries}\label{Section:Prelim}
When we refer to an expression as being a square, we mean a square in $\Z$.
The next theorem follows from \cite{Ljunggren}.
\begin{thm}\label{Thm:Ljunggren}
  Let $N\ge 0$ be an integer. Then $2^N-1$ is a square if and only if $N\in \{0,1\}$.
\end{thm}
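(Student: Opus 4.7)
The statement is quite elementary, and although the paper attributes it to a result of Ljunggren, the forward direction admits a short congruence-based argument that I would give.

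The plan is to handle the two directions separately. The backward implication is immediate by direct verification: $2^0-1=0=0^2$ and $2^1-1=1=1^2$, so for $N\in\{0,1\}$ the number $2^N-1$ is indeed a square. The work is in the forward direction, which is the content of the claim.

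For the forward direction, I would suppose $2^N-1=m^2$ for some integer $m\ge 0$ and aim to show $N\le 1$. The key observation is a reduction mod $4$. If $N\ge 2$, then $4\mid 2^N$, so
\[
2^N-1\equiv -1\equiv 3\pmod 4.
\]
On the other hand, every integer square satisfies $m^2\equiv 0$ or $1\pmod 4$, since $(2k)^2=4k^2$ and $(2k+1)^2=4k(k+1)+1$. This contradicts $m^2\equiv 3\pmod 4$, so no solution exists for $N\ge 2$. Hence $N\in\{0,1\}$.

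The only real question is whether one wants the argument to be this elementary or to appeal instead to the deeper Diophantine results of Ljunggren that are cited; since the statement itself is a trivial consequence of a parity/residue calculation, I expect no obstacle. In particular, there is no need to invoke machinery about Pell-type equations or linear forms in logarithms here. If the author prefers a single-line justification, the proof is simply: for $N\ge 2$, $2^N-1\equiv 3\pmod 4$, which is a quadratic nonresidue modulo $4$; combined with the trivial check at $N=0,1$, the theorem follows.
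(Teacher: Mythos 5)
Your proof is correct. Note that the paper does not actually supply a proof of this statement at all: it simply remarks that the result ``follows from'' Ljunggren's work on the Diophantine equation $\frac{x^n-1}{x-1}=y^q$, which is a far deeper theorem than what is needed here. Your mod~$4$ argument --- for $N\ge 2$ one has $2^N-1\equiv 3\pmod 4$, while every square is congruent to $0$ or $1\pmod 4$, together with the trivial checks at $N=0,1$ --- is complete, elementary, and self-contained. What your approach buys is independence from the cited reference; what the citation buys the author is essentially nothing in this special case, since the full strength of Ljunggren's result (which concerns higher power values of repunits in general bases) is never used. If anything, your observation shows the theorem could be stated and proved in one line in the Preliminaries without any citation.
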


The formula for the discriminant of an arbitrary monic trinomial, due to Swan \cite[Theorem 2]{Swan}, is given in the following theorem. 
\begin{thm}
\label{Thm:Swan}
Let $f(x)=x^n+Ax^m+B\in \Q[x]$, where $0<m<n$, and let $d=\gcd(n,m)$. Then
\[
\Delta(f)=(-1)^{n(n-1)/2}B^{m-1}\left(n^{n/d}B^{(n-m)/d}-(-1)^{n/d}(n-m)^{(n-m)/d}m^{m/d}A^{n/d}\right)^d.
\]
\end{thm}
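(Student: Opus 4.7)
The plan is to derive the formula from the classical identity $\Delta(f) = (-1)^{n(n-1)/2}\mathrm{Res}(f, f')$, valid since $f$ is monic, and to exploit the trinomial shape by factoring
\[
f'(x) = nx^{n-1} + mAx^{m-1} = x^{m-1}\bigl(nx^{n-m} + mA\bigr).
\]
Multiplicativity of the resultant in the second argument then gives
\[
\mathrm{Res}(f, f') = \mathrm{Res}(f, x^{m-1}) \cdot \mathrm{Res}\bigl(f,\, nx^{n-m} + mA\bigr).
\]

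For the first factor, writing $\alpha_1,\dots,\alpha_n$ for the roots of $f$, one has $\mathrm{Res}(f, x^{m-1}) = \prod_i \alpha_i^{m-1}$, and $\prod_i \alpha_i = (-1)^n B$, producing $(-1)^{n(m-1)}B^{m-1}$. For the second factor, I would use the root-product form $\mathrm{Res}(f, g) = (-1)^{n\deg g}\,\mathrm{lc}(g)^n \prod_{g(\beta)=0}f(\beta)$ with $g(x) = nx^{n-m}+mA$. The key simplification is that whenever $g(\beta)=0$, i.e.\ $\beta^{n-m} = -mA/n$, we get $\beta^n = \beta^m\beta^{n-m} = -(mA/n)\beta^m$, which collapses $f(\beta)$ to the pure $\beta^m$-expression $B + \tfrac{(n-m)A}{n}\beta^m$.

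I then compute $\prod_{g(\beta)=0} f(\beta)$ by parametrizing the roots as $\beta_j = \rho\zeta^j$, where $\rho$ is a fixed $(n-m)$-th root of $-mA/n$ and $\zeta$ is a primitive $(n-m)$-th root of unity. With $d = \gcd(n,m) = \gcd(n-m,m)$ and $e = (n-m)/d$, the values $\zeta^{jm}$ sweep out the $e$-th roots of unity, each visited exactly $d$ times. The product therefore breaks into $d$ identical blocks, each of which closes up via the identity $\prod_{k=0}^{e-1}(X - \omega^k Y) = X^e - Y^e$. Using $\rho^{me} = (-mA/n)^{m/d}$ and clearing the denominator against the $\mathrm{lc}(g)^n = n^n$ out front leads to the bracketed expression $\bigl[n^{n/d}B^{(n-m)/d} - (-1)^{n/d}(n-m)^{(n-m)/d}m^{m/d}A^{n/d}\bigr]^d$.

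The main obstacle is sign bookkeeping: four separate signs appear, namely $(-1)^{n(n-1)/2}$ from $\Delta \leftrightarrow \mathrm{Res}$, $(-1)^{n(m-1)}$ from $\mathrm{Res}(f,x^{m-1})$, $(-1)^{n(n-m)}$ from the root-product conversion, and $(-1)^{(n-m)/d + m/d} = (-1)^{n/d}$ arising inside the block when the identity is applied with $Y = -C$. Because $n(m-1) + n(n-m) = n(n-1)$ is even, the three external signs collapse to $(-1)^{n(n-1)/2}$, while the internal $(-1)^{n/d}$ lands exactly where the theorem requires it. The edge case $m=1$ causes no difficulty: then $x^{m-1}=1$ and $B^{m-1}=1$, and every other step goes through verbatim.
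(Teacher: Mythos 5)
Your proof is correct. The paper does not prove this statement at all --- it simply quotes the formula from Swan's 1962 paper --- so there is nothing internal to compare against; your derivation via $\Delta(f)=(-1)^{n(n-1)/2}\mathrm{Res}(f,f')$, the factorization $f'(x)=x^{m-1}(nx^{n-m}+mA)$, and evaluation of $f$ at the roots of $nx^{n-m}+mA$ is exactly the classical route (essentially Swan's own), and your sign accounting checks out: $(-1)^{n(m-1)}\cdot(-1)^{n(n-m)}=(-1)^{n(n-1)}=1$, and the internal exponent $(n-m)/d+m/d=n/d$ lands the remaining sign where the statement requires it.
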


The next two theorems are transcriptions due to Awtrey and Patane \cite{Awtrey2} of the more-algorithmic classifications of $\FF(x)$ given in \cite{Chen}.
\begin{thm}\label{Thm:Chen1}
Let $\FF(x)$ be as defined in \eqref{Eq:F}. Suppose that $b$ and $\sqrt{b}$ are squares. Then $\Gal(\FF)$ is isomorphic to
\begin{enumerate}
  \item \label{C1:I1} {\rm 8T2} if and only if $-a^2+4b$ is a square,
  \item \label{C1:I2} {\rm 8T3} if and only if $a+2\sqrt{b}$ is a square,
  \item \label{C1:I3} {\rm 8T4} if and only if $a-2\sqrt{b}$ is a square,
  \item \label{C1:I4} {\rm 8T9} if and only if none of $-a^2+4b$, $a+2\sqrt{b}$ and $a-2\sqrt{b}$ is a square.
\end{enumerate}
\end{thm}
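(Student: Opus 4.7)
The plan is to exploit the special form $b=d^4$ (from $b$ and $\sqrt{b}$ being squares) to pin down a second explicit root $\beta$ of $\FF$ inside $K=\Q(\alpha)$, and then read off the Galois group from the arithmetic of two concrete identities. Writing $c=\sqrt{b}$ and $d=\sqrt{c}$, I fix a root $\alpha$ and set $\beta:=d/\alpha$; then $\alpha\beta=d$ and $\beta^4=b/\alpha^4=y_-$, where $y_\pm=(-a\pm\sqrt{a^2-4b})/2$ are the roots of the resolvent $y^2+ay+b$. The eight roots of $\FF$ are $\{\pm\alpha,\pm i\alpha,\pm\beta,\pm i\beta\}$, so the splitting field is $L=K(i)$, of degree $8$ or $16$ over $\Q$.

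Next, I use the identities
\[
(\alpha^2+\beta^2)^2=-a+2\sqrt{b},\qquad (\alpha^2-\beta^2)^2=-a-2\sqrt{b},
\]
which come from $\alpha^2\beta^2=d^2=\sqrt{b}$ and $\alpha^4+\beta^4=-a$, to show that each of conditions (1), (2), (3) forces $i\in K$ (hence $L=K$ has degree $8$): (1) makes $\sqrt{a^2-4b}$ a $\Z$-multiple of $i$, so $y_+\in\Q(i)$ and together with $y_+=\alpha^4\in K$ yields $i\in K$; (2) makes $(\alpha^2-\beta^2)^2$ the negative of a perfect square, so $\alpha^2-\beta^2\in i\Z$ and $i\in K$; (3) is symmetric via $\alpha^2+\beta^2$. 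For the converse I plan to use the elementary factorizations $x^8+ax^4+b=(x^4\pm c)^2\mp(a\mp 2c)x^4$: irreducibility of $\FF$ rules out $2c-a$ and $-2c-a$ being nonnegative integer squares, which combined with $i\in K$ leaves exactly one of (1)--(3) available, and shows the three conditions are pairwise exclusive.

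To identify the abstract Galois group in each case, I analyze the automorphisms $\sigma:\alpha\mapsto i\alpha$ (which forces $\sigma(\beta)=-i\beta$ via $\alpha\beta=d$) and $\tau:\alpha\mapsto\beta$, tracking their action on $i$ through the identities above. In case (1), $\sigma$ fixes $\alpha^4\in\Q(i)$, so $\sigma(i)=i$ and $\ord(\sigma)=4$; $\sigma$ and $\tau$ commute, giving $C_4\times C_2=\mathrm{8T2}$. In case (2), both $\sigma$ and $\tau$ negate $i$, all of $\sigma$, $\tau$, $\sigma\tau$ become involutions, and together with the commuting involution $\alpha\mapsto-\alpha$ they generate $C_2^3=\mathrm{8T3}$. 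In case (3), $\sigma(i)=-i$ while $\tau(i)=i$, so $\sigma\tau\ne\tau\sigma$; one computes $(\sigma\tau)^2(\alpha)=-\alpha$, so $\sigma\tau$ has order $4$ and the group is $D_4=\mathrm{8T4}$. In case (4) all three hypotheses fail, $i\notin K$, and $[L:\Q]=16$; the two commuting central involutions $\alpha\mapsto-\alpha$ (with $i\mapsto i$) and $\alpha\mapsto\beta$ (with $i\mapsto-i$) show that the centre of $\Gal(L/\Q)$ is $C_2\times C_2$, from which I conclude $\Gal(L/\Q)\cong C_2\times D_4=\mathrm{8T9}$.

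The main obstacle I expect is case (4): distinguishing $C_2\times D_4$ from the other order-$16$ candidates in this family (such as $C_4\circ D_4=\mathrm{8T11}$ or $D_4\circ D_4=\mathrm{8T22}$), whose centres are $C_4$ and $C_2$ respectively. The centre computation just sketched settles this, but writing it out requires verifying that the proposed central elements genuinely extend to automorphisms of $L$, which in turn uses $\Q(i)\cap K=\Q$ (forced by $i\notin K$ and $[\Q(i):\Q]=2$).
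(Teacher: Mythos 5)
The paper itself contains no proof of Theorem \ref{Thm:Chen1}: it is imported verbatim as a ``transcription due to Awtrey and Patane \cite{Awtrey2}'' of the classification in \cite{Chen}, so any argument you supply is necessarily a different route. Your skeleton is sound and most of it checks out: with $b=d^4$ the element $\beta=d/\alpha$ is indeed a second root with $\beta^4=y_-$, the splitting field is $K(i)$, the identities $(\alpha^2\pm\beta^2)^2=-a\pm 2\sqrt{b}$ are correct, each of (1)--(3) forces $i\in K$, the order-$8$ group computations give $C_2\times C_4$, $C_2^3$, $D_4$ as claimed, and pairwise exclusivity of (1)--(3) does follow from the factorizations $(x^4\pm c)^2\mp(a\mp 2c)x^4$ together with irreducibility.

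The genuine gap is in case (4), at the assertion ``$i\notin K$, and $[L:\Q]=16$.'' What you must prove is: $i\in K$ implies one of (1)--(3). Your identities show only that $\Q(i)$ equals one of the three quadratic subfields $\Q(\sqrt{2c-a})$, $\Q(\sqrt{-2c-a})$, $\Q(\sqrt{a^2-4b})$ of the biquadratic field $M:=\Q(\alpha^2)$ precisely when one of (1)--(3) holds; they do not exclude the a priori possibility that $i\in K\setminus M$, in which case $K=M(i)$ would be multiquadratic with group $C_2^3$ while none of (1)--(3) holds. The factorization remark does not touch this possibility. It can be closed, but it needs a real argument, e.g.: if $i\in K=M(\alpha)$ and $i\notin M$, Kummer theory gives $-\alpha^2=\mu^2$ with $\mu\in M$; letting $g\in\Gal(M/\Q)$ be the element with $g(\alpha^2)=-\alpha^2$ (whose fixed field is $\Q(\sqrt{a^2-4b})$) and multiplying the relation by its $g$-conjugate yields $-\alpha^4=(\mu g(\mu))^2$ with $\mu g(\mu)=u+v\sqrt{a^2-4b}$, $u,v\in\Q$; comparing coefficients in $(a-\sqrt{a^2-4b})/2=(u+v\sqrt{a^2-4b})^2$ forces $(2u)^2=a\pm 2\sqrt{b}$, i.e.\ condition (2) or (3) holds after all, a contradiction. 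A second, smaller defect: you deduce $G\cong C_2\times D_4$ from $Z(G)\supseteq C_2\times C_2$, but $C_2\times Q_8$ and $C_4\rtimes C_4$ also have centre $C_2^2$; to finish you should either observe that $G$ contains the \emph{non-central} involution $\alpha\mapsto\alpha$, $i\mapsto -i$ (every involution in those two competitors is central), or exhibit directly the decomposition $G=\langle\sigma,c\rangle\times\langle\tau\rangle$ with $\langle\sigma,c\rangle\cong D_4$, where $\sigma:\alpha\mapsto i\alpha,\ i\mapsto i$ and $c:\alpha\mapsto\alpha,\ i\mapsto -i$.
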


\begin{thm}\label{Thm:Chen2}
Let $\FF(x)$ be as defined in \eqref{Eq:F}. Suppose that $b$ is a square and $\sqrt{b}$ is not a square. Then $\Gal(\FF)$ is isomorphic to
\begin{enumerate}
  \item \label{C2:I1} {\rm 8T2} if and only if $a+2\sqrt{b}$ and $a\sqrt{b}-2b$ are both squares,
  \item \label{C2:I2} {\rm 8T4} if and only if both quantities in one of the following pairs are squares:
      \[(a+2\sqrt{b},-a\sqrt{b}+2b),\quad (a-2\sqrt{b},a\sqrt{b}+2b),\quad (-a\sqrt{b}+2b,a\sqrt{b}+2b).\]
  \item \label{C2:I3} {\rm 8T9} if and only if $(a-4b)\sqrt{b}$ is a not a square, and exactly one of
      \[a+2\sqrt{b},\quad a-2\sqrt{b},\quad -a\sqrt{b}+2b,\quad a\sqrt{b}+2b\]
      is a square.
  \item \label{C2:I4} {\rm 8T11} if and only if neither $-a\sqrt{b}+2b$ nor $a\sqrt{b}+2b$ is a square, and exactly one of
      \[-a^2+4b,\quad -a\sqrt{b}-2b,\quad (a^2-4b)\sqrt{b},\quad a\sqrt{b}-2b,\quad (4b-a^2)\sqrt{b}\] is a square,
  \item \label{C2:I5} {\rm 8T22} if and only if none of
  \[a+2\sqrt{b},\quad a-2\sqrt{b},\quad -a\sqrt{b}+2b,\quad a\sqrt{b}+2b,\]
  \[-a^2+4b,\quad -a\sqrt{b}-2b,\quad (a^2-4b)\sqrt{b},\quad a\sqrt{b}-2b,\quad (4b-a^2)\sqrt{b}\] is a square.
\end{enumerate}
\end{thm}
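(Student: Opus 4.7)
The plan is to analyze the splitting field $L$ of $\FF(x)=x^8+ax^4+b$ directly and then match each candidate transitive subgroup of $S_8$ against the explicit list of ``square conditions'' by reading off the appropriate quadratic subfields.

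First, I would set up the roots. Let $\alpha,\beta$ be the two roots of the quadratic resolvent $y^2+ay+b$, so that $\alpha+\beta=-a$ and $\alpha\beta=b$. The eight roots of $\FF$ are the four fourth roots of $\alpha$ together with the four fourth roots of $\beta$, giving $L=\Q(\alpha^{1/4},\beta^{1/4},i)$. Under the running hypothesis that $b$ is a square in $\Z$ but $\sqrt{b}$ is not, the identity $\alpha\beta=b$ forces $\sqrt{\alpha}\sqrt{\beta}=\pm\sqrt{b}\in\Q$, so that $\Q(\sqrt{\alpha})=\Q(\sqrt{\beta})$, and $\alpha^{1/4}\beta^{1/4}$ is a square root of $\pm\sqrt{b}$, hence lies in $\Q(b^{1/4})$, a genuine quadratic extension of $\Q$.

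Second, I would derive the key square-root identities underpinning the listed conditions. Direct computation gives
\[(\sqrt{-\alpha}+\sqrt{-\beta})^2 = a+2\sqrt{b}, \qquad (\sqrt{-\alpha}-\sqrt{-\beta})^2 = a-2\sqrt{b},\]
and multiplication by suitable fourth roots of $b$ yields analogous elements whose squares are $\pm a\sqrt{b}\pm 2b$. The discriminant of the resolvent supplies $-a^2+4b$, and twisting by $\sqrt{b}$ produces the paired expressions $(a^2-4b)\sqrt{b}$ and $(4b-a^2)\sqrt{b}$. Each of the nine listed integers being a square thus translates into a specific quadratic subfield of $L$ collapsing to $\Q$.

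Third, I would organize the case analysis by the order of $\Gal(\FF)$, since the candidates are 8T2 and 8T4 (order $8$), 8T9 and 8T11 (order $16$), and 8T22 (order $32$). For each candidate group $G$, enumerating its index-two subgroups pins down exactly which quadratic subfields sit inside $L$, producing a ``square-condition fingerprint'' for $G$. Verifying that these fingerprints are pairwise disjoint, and that they jointly exhaust all possibilities under the hypothesis, closes the classification.

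The main obstacle I expect is separating 8T11 from 8T22. Both arise once the first four items $a\pm 2\sqrt{b}$ and $\pm a\sqrt{b}\pm 2b$ fail to be squares, and the split depends on how many of the five second-tier expressions in part (4) are squares. Care is needed because these expressions are algebraically entangled: for instance, if $-a^2+4b$ and $(a^2-4b)\sqrt{b}$ were simultaneously squares, their ratio would force $\sqrt{b}$ to be a square, contradicting the hypothesis. Ruling out such coincidences by short elementary arguments -- and invoking the standard recipe that identifies each index-two subgroup of a $2$-group with a quadratic subfield of its fixed field -- should complete the separation and finish the proof.
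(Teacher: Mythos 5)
This statement is not proved in the paper at all: Theorem~\ref{Thm:Chen2} is imported verbatim as a ``transcription due to Awtrey and Patane \cite{Awtrey2} of the more-algorithmic classifications of $\FF(x)$ given in \cite{Chen}.'' So there is no internal proof to compare against; your attempt has to be judged on its own.

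What you propose is the standard resolvent-and-subfield strategy that the cited sources actually use, and the ingredients you name are the right ones: the roots of $y^2+ay+b$, the identity $(\sqrt{-\alpha}\pm\sqrt{-\beta})^2=a\pm2\sqrt{b}$, the twists by $b^{1/4}$ producing $\pm a\sqrt{b}\pm 2b$, the resolvent discriminant producing $-a^2+4b$ and its $\sqrt{b}$-twists, and the dictionary between index-two subgroups of the $2$-group $\Gal(\FF)$ and quadratic subfields of the splitting field. The observation that under the hypothesis ($b$ a square, $\sqrt{b}$ not) one has $\sqrt{\alpha}\sqrt{\beta}\in\Q$ while $\alpha^{1/4}\beta^{1/4}$ generates the genuine quadratic field $\Q(b^{1/4})$ is exactly the structural feature that distinguishes this case from Theorem~\ref{Thm:Chen1}.

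That said, as written this is a program, not a proof. The entire content of the theorem lives in the step you summarize as ``verifying that these fingerprints are pairwise disjoint, and that they jointly exhaust all possibilities'': one must actually enumerate the index-two subgroups of each of 8T2, 8T4, 8T9, 8T11, 8T22 inside $S_8$, match each to one of the nine rational expressions, and check exhaustiveness and disjointness. None of that bookkeeping is carried out, and it is precisely where the asymmetries of the statement come from --- e.g.\ why 8T2 is detected by the single pair $(a+2\sqrt{b},\,a\sqrt{b}-2b)$ while 8T4 admits three detecting pairs, and why item \eqref{C2:I3} needs the auxiliary non-square condition on $(a-4b)\sqrt{b}$, which your outline never accounts for. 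Also, your coincidence argument for separating 8T11 from 8T22 is slightly off as stated: if $-a^2+4b$ and $(a^2-4b)\sqrt{b}$ were both squares, their product $-(a^2-4b)^2\sqrt{b}$ would be a square, which is impossible because it is negative --- this does not ``force $\sqrt{b}$ to be a square.'' The conclusion is the same, but it signals that the web of dependencies among the nine expressions needs to be worked through explicitly rather than asserted. In short: correct skeleton, consistent with how \cite{Awtrey2} and \cite{Chen} proceed, but the proof is not yet there.
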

The next theorem is a compilation of the classifications given in \cite{Awtrey1} and \cite{Awtrey2}.
\begin{thm}\label{Thm:Awtrey} Let $\FF(x)$ be as defined in \eqref{Eq:F}.
Suppose that $b$ is not a square.
\begin{enumerate}
\item Then $\Gal(\FF)\simeq$ {\rm 8T16} if and only if $b(a^2-4b)$ is a square.
\item Then $\Gal(\FF)\simeq$ {\rm 8T17} if and only if $b(a^2-4b)$ is not a square and $4b-a^2$ is a square.
\item  Suppose further that neither $b(a^2-4b)$ nor $4b-a^2$ is a square.
    \begin{enumerate}
    \item Then $\Gal(\FF)\simeq$ {\rm 8T6} if and only if one of
\[2\sqrt{-b},\quad 4b+2\sqrt{-b(a^2-4b)}, \quad 4b-2\sqrt{-b(a^2-4b)}\]
is a nonzero square.
\item Then $\Gal(\FF)\simeq$ {\rm 8T8} if and only if 
    \[2(a^2-4b)\sqrt{-b}\quad \mbox{or}\quad -4b+2\sqrt{-b(a^2-4b)}\]
    is a nonzero square. 
\item Then $\Gal(\FF)\simeq$ {\rm 8T15} if and only if $-b$ or $-b(a^2-4b)$ is a square, and none of
\[2\sqrt{-b},\quad 2(a^2-4b)\sqrt{-b},\quad 4b+2\sqrt{-b(a^2-4b)},\] \[4b-2\sqrt{-b(a^2-4b)}, \quad -4b+2\sqrt{-b(a^2-4b)}\]
is a nonzero square.
\item Then $\Gal(\FF)\simeq$ {\rm 8T26} if and only if neither $-b$ nor $-b(a^2-4b)$ is a square.
    \end{enumerate}
\end{enumerate}
\end{thm}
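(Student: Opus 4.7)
The plan is to prove Theorem~\ref{Thm:Awtrey} as a compilation, so the most economical route is to verify that items (1)--(3) faithfully transcribe the corresponding classifications in \cite{Awtrey1} and \cite{Awtrey2}. Below I sketch how one would reprove the theorem directly, since the same tower of fields appears implicitly in the monogenicity arguments of Theorem~\ref{Thm:Main}.

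First I would exploit the biquadratic composition $\FF(x)=h(x^2)$, where $h(y)=y^4+ay^2+b$. Writing $z_1,z_2$ for the roots of $z^2+az+b$, we have $z_1z_2=b$ and $z_1+z_2=-a$, and the roots of $\FF$ take the form $\pm\beta_{ij}$, where $\beta_{ij}^2=\alpha_i$ and $\alpha_i^2=z_j$. A short computation shows that if $\beta,\beta'$ are square roots of $\alpha$ and $-\alpha$, respectively, then $\beta'/\beta=\pm i$, so the splitting field $K_\FF$ always contains $i=\sqrt{-1}$. Similarly $K_\FF$ always contains $\sqrt{b}$ (as the product of square roots of $z_1$ and $z_2$) and $\sqrt{a^2-4b}$ (as $\pm(z_1-z_2)$), whence $\sqrt{-b}$ and $\sqrt{4b-a^2}$ also lie in $K_\FF$. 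The square conditions in the theorem then test which of these expressions actually lie in $\Q$, i.e., which of the corresponding quadratic subfields collapse.

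The next step is to carry out a case analysis on the square conditions, building up the signature of the Galois group through its fixed-field lattice. Item~(1), where $b(a^2-4b)$ is a square, forces $\sqrt{-b(a^2-4b)}\in\Q(i)$ and produces 8T16. Item~(2), where $4b-a^2$ is a square but $b(a^2-4b)$ is not, pins down 8T17. In item~(3), with neither condition met, the further dichotomies based on $-b$, $-b(a^2-4b)$, and the nested radicals $4b\pm 2\sqrt{-b(a^2-4b)}$ and $-4b+2\sqrt{-b(a^2-4b)}$ determine whether $\Gal(\FF)$ is 8T6, 8T8, 8T15, or 8T26.

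The hard part will be the bookkeeping required to verify that each nested radical condition in (3)(a)--(d) cuts out exactly the intended quadratic subfield of $K_\FF$, and that the listed conditions are simultaneously necessary and sufficient to pin down each Galois group. Distinguishing 8T15 from 8T26 is the most delicate, since both require that no simpler quadratic collapse, and the dichotomy ultimately turns on whether $-b$ or $-b(a^2-4b)$ is a square. A systematic match can be made via the resolvent polynomials of \cite{KM}, whose factorization patterns encode the same information as the square conditions above; cross-checking against the tables in \cite{Awtrey1,Awtrey2} then confirms the classification.
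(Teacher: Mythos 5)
Your approach matches the paper's exactly: the paper offers no proof of Theorem~\ref{Thm:Awtrey}, presenting it only as a compilation of the classifications in \cite{Awtrey1} and \cite{Awtrey2}, which is precisely the "most economical route" you identify. Your field-theoretic sketch (the splitting field of $\FF$ contains $i$, $\sqrt{b}$ and $\sqrt{a^2-4b}$, and the square conditions detect which quadratic subfields collapse) is a correct outline of how the cited papers proceed, but be aware that it does not itself constitute a proof: the case analysis distinguishing 8T6, 8T8, 8T15 and 8T26 in item (3) — which you explicitly defer as "the hard part" — is where all of the content lies, so as written you have established nothing beyond what the citation already provides.
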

The next two theorems present the classification of the Galois groups given in \cite{Awtrey3} 
 for $\GG(x)$, as defined in \eqref{Eq:G}. We let
 \begin{equation}\label{Eq:Widef}
W_1:=b+2-2a,\quad W_2:=b+2+2a \quad \mbox{and} \quad W_3:=a^2-4b+8.
\end{equation}


\begin{thm}\label{Thm:AP1}
  Let $\GG(x)$ be as defined in \eqref{Eq:G}. Then $\Gal(\GG)\simeq $
  \begin{enumerate}
       \item {\rm 8T2} if and only if exactly two of
       \[W_1W_3,\quad  W_2W_3 \quad \mbox{and} \quad W_1W_2W_3\quad \mbox{are squares,}\]
       \item  {\rm 8T3} if and only if all of
      \[W_1,\quad W_2 \quad \mbox{and} \quad W_1W_2 \quad \mbox{are squares,}\]
      \item  {\rm 8T10} if and only if exactly one of
      \[W_1W_3,\quad W_2W_3 \quad \mbox{and} \quad W_1W_2W_3 \quad \mbox{is a square,}\]
      \item  {\rm 8T18} if and only if none of
      \[W_1,\ W_2,\ W_1W_2,\ W_1W_3,\ W_2W_3 \ \mbox{ and }\ W_1W_2W_3 \ \mbox{ is a square.}\]
      \end{enumerate}
\end{thm}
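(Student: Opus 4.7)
The plan is to exploit the special form of $\GG(x)$ as both even and palindromic. Dividing by $x^4$ and setting $z := x^2 + x^{-2}$ converts $\GG(x)/x^4 = 0$ into the resolvent quadratic $z^2 + az + (b-2) = 0$, whose discriminant is precisely $W_3 = a^2 - 4b + 8$. Hence $\Q(\sqrt{W_3})$ sits naturally inside the splitting field $L$ of $\GG$, and collapses to $\Q$ exactly when $W_3$ is a square. Letting $z_1, z_2$ denote the two roots of the resolvent quadratic, a direct application of Vieta yields
\[
W_1 = (z_1 + 2)(z_2 + 2), \qquad W_2 = (z_1 - 2)(z_2 - 2), \qquad W_1 W_2 = (z_1^2 - 4)(z_2^2 - 4).
\]
Since $z_i^2 - 4$ is exactly the discriminant of the quartic factor $x^4 - z_i x^2 + 1$ of $\GG$ over $\Q(z_i)$, the quantities $W_1, W_2, W_1 W_2$ are norms from $\Q(\sqrt{W_3})$ to $\Q$ of the ``inner'' discriminants that must become squares in $L$.

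Next I would analyze the tower $\Q \subset \Q(\sqrt{W_3}) \subset \Q(z_1, \sqrt{z_1^2-4}) \subset L$ one step at a time. Each step is at most a quadratic extension, so $[L:\Q] \in \{8, 16, 32\}$, matching the orders of the four candidate Galois groups (8T3 and 8T2 have order $8$, 8T10 has order $16$, and 8T18 has order $32$). The nontrivial quadratic subfields of $L/\Q$ correspond to the nonsquare elements among $\{W_1, W_2, W_3, W_1 W_2, W_1 W_3, W_2 W_3, W_1 W_2 W_3\}$ modulo $\Q^{\times 2}$. Identifying and counting these subfields splits into four regimes: all three of $W_1, W_2, W_1 W_2$ being squares collapses the tower to a multiquadratic extension, forcing $\Gal(\GG) \simeq C_2^3 = $ 8T3; having none of the seven listed quantities be a square preserves every candidate quadratic subfield and yields the maximal 8T18 $\simeq C_2^2 \wr C_2$ of order $32$; and the intermediate ``exactly one square'' and ``exactly two squares'' conditions on the $W_iW_3$-products must be shown to match the $C_2^2\rtimes C_4$ (8T10) and $C_2\times C_4$ (8T2) structures, respectively, via the specific way the generator of $\Gal(\Q(\sqrt{W_3})/\Q)$ acts on the inner square roots.

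The hardest part will be the careful descent bookkeeping needed to separate 8T2 from 8T10. Both groups produce common quadratic invariants, and the distinction comes from whether $\sqrt{z_1^2 - 4}$ descends to $L$ together with $\sqrt{W_3}$ in an abelian or nonabelian way. Concretely, one must apply the standard criterion that an element $\alpha \in \Q(\sqrt{W_3})$ is a square in $\Q(\sqrt{W_3})$ iff $N(\alpha)$ is a square and $(\tr(\alpha) \pm 2\sqrt{N(\alpha)})/2$ is as well, and then translate the resulting conditions on $z_1 \pm 2$ into the stated square conditions on the products $W_iW_3$ and $W_1W_2W_3$. Once this translation is in place, the ``exactly one'' versus ``exactly two'' dichotomy falls out, giving the noncyclic $C_2^2 \rtimes C_4$ in the first case and the abelian $C_2 \times C_4$ in the second. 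I would expect the remaining verifications (irreducibility forcing the strictness of each tower level, and mutual exclusivity of the four cases) to reduce to straightforward linear-algebra computations in $\Q^\times/\Q^{\times 2}$ generated by $W_1, W_2, W_3$.
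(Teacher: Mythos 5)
The paper does not actually prove Theorem \ref{Thm:AP1}: it is quoted from Awtrey and Patane \cite{Awtrey3}, so there is no internal proof to compare you against. Judged on its own terms, your outline does identify the correct structural skeleton: the resolvent quadratic $z^2+az+(b-2)$ with discriminant $W_3$, the identities $W_1=(z_1+2)(z_2+2)$ and $W_2=(z_1-2)(z_2-2)$, and the principle that $\Gal(\GG)$ is read off from which products of the $W_i$ lie in $\Q^{\times 2}$. That is indeed the mechanism behind the classification.

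As a proof, however, there are genuine gaps. First, the decisive step --- showing that each square-condition regime yields the specific transitive group claimed (8T2 versus 8T10 versus 8T18) --- is precisely the part you defer (``must be shown to match,'' ``I would expect''); nothing in the outline actually computes a Galois group except, arguably, in the multiquadratic $C_2^3$ case. Second, the biconditionals require all regimes to be mutually exclusive and exhaustive, including the complementary regime of Theorem \ref{Thm:AP2} (exactly one of $W_1,W_2,W_1W_2$ a square and none of $W_1W_3,W_2W_3,W_1W_2W_3$ a square, giving 8T4 or 8T9); your four regimes do not exhaust the possibilities, so even granting the ``if'' directions, the ``only if'' directions are unaddressed. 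Third, the degree bookkeeping is inconsistent: you describe a tower of three steps ``each at most quadratic,'' which would cap $[L:\Q]$ at $8$, yet 8T10 and 8T18 have orders $16$ and $32$. The correct generators of the splitting field are the four elements $\sqrt{z_i\pm 2}$ (since the roots of $\GG$ are $\bigl(\pm\sqrt{z_i+2}\pm\sqrt{z_i-2}\bigr)/2$), so the top layer has degree up to $16$ over $\Q(z_1,z_2)$, and the relevant rationality questions concern $z_i\pm 2$ individually, not just $z_i^2-4$. (Also, $z_i^2-4$ is the discriminant of the quadratic $y^2-z_iy+1$, not of the quartic $x^4-z_ix^2+1$, whose discriminant is the square $16(z_i^2-4)^2$.) These defects are repairable, but the repairs constitute essentially all of the work.
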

\begin{thm}\label{Thm:AP2} $\Gal(\GG)\simeq $ {\rm 8T4} or {\rm 8T9} if and only if exactly one of
\[W_1,\quad W_2 \quad \mbox{and} \quad W_1W_2 \quad \mbox{is a square,}\] and none of
\[W_1W_3,\quad W_2W_3 \quad \mbox{and} \quad W_1W_2W_3 \quad \mbox{is a square.}\] Furthermore, if $\Gal(\GG)\simeq $ {\rm 8T4} or {\rm 8T9}, then
$\Gal(\GG)\simeq $ {\rm 8T4} if and only if\\
 either exactly one of
\[W_2\left(-a+4-2\sqrt{W_1}\right) \quad \mbox{and} \quad  W_2\left(-a+4+2\sqrt{W_1}\right) \quad \mbox{is a square}\]
\[\mbox{when $W_1$ is a square,}\]
or exactly one of
\[W_1\left(-a-4-2\sqrt{W_2}\right)  \quad \mbox{and} \quad  W_1\left(-a-4+2\sqrt{W_2}\right) \quad \mbox{is a square}\]
\[\mbox{when $W_2$ is a square,}\]
or exactly one of
\[W_2\left((12-2b-W_3)^2-4W_1W_2\right), \quad W_2\left(2b+W_3-12+2\sqrt{W_1W_2}\right)\]
\[\quad \mbox{and}\quad  W_2\left(2b+W_3-12-2\sqrt{W_1W_2}\right)\quad \mbox{is a square}\]
\[\mbox{when $W_1W_2$ is a square;}\]
 otherwise, $\Gal(\GG)\simeq $ {\rm 8T9}.
\end{thm}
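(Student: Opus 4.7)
The plan is to factor $\GG(x) = h(x^2)$ where $h(y) = y^4 + ay^3 + by^2 + ay + 1$ is palindromic, and use the substitution $z = y + 1/y$ to reduce $h$ to the quadratic $z^2 + az + (b - 2)$ of discriminant $W_3$, whose roots are $z_\pm = (-a \pm \sqrt{W_3})/2$. Over $\Q(\sqrt{W_3})$ we obtain $h(y) = (y^2 - z_+ y + 1)(y^2 - z_- y + 1)$, and a direct computation yields
\[
(z_+ + 2)(z_- + 2) = W_1, \quad (z_+ - 2)(z_- - 2) = W_2, \quad (z_+ - z_-)^2 = W_3,
\]
together with $\mathrm{disc}(h) = W_1 W_2 W_3$. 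If $\alpha$ is a root of $\GG$ then $\alpha^2$ is a root of $h$ and $(\alpha \pm 1/\alpha)^2 = z_\pm \pm 2$; hence the splitting field of $\GG$ is
\[
L = \Q\bigl(\sqrt{z_+ + 2},\ \sqrt{z_+ - 2},\ \sqrt{z_- + 2},\ \sqrt{z_- - 2}\bigr).
\]
The resolvent cubic of $h$ factors as $(z - 2)\bigl(z^2 + (2-b)z + (a^2 - 2b)\bigr)$, whose quadratic factor has discriminant $W_1 W_2$, so $\Gal(h) \simeq V_4$ iff $W_1 W_2$ is a square and $\Gal(h) \simeq D_4$ (with $h$ irreducible) iff $W_1 W_2$ is not a square.

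For the first assertion of Theorem \ref{Thm:AP2}, I would argue by elimination using Theorem \ref{Thm:AP1}, which pins down the cases $\Gal(\GG) \in \{\mathrm{8T2}, \mathrm{8T3}, \mathrm{8T10}, \mathrm{8T18}\}$; since $X_{\GG} = \{2, 3, 4, 9, 10, 18\}$, the remaining possibilities are exactly $\{\mathrm{8T4}, \mathrm{8T9}\}$. Observing that ``exactly two of $\{W_1, W_2, W_1 W_2\}$ are squares'' is impossible (squares multiply to squares), the possible patterns for $(W_1, W_2, W_1 W_2)$ are: all three squares (corresponding to 8T3), none squares (appearing in 8T18), or exactly one square. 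A short case-analysis shows that the hypothesis ``exactly one of $W_1, W_2, W_1 W_2$ is a square and none of $W_1 W_3, W_2 W_3, W_1 W_2 W_3$ is a square'' is the precise complement of the four cases in Theorem \ref{Thm:AP1}, so under it $\Gal(h) \simeq D_4$ and $\Gal(\GG) \in \{\mathrm{8T4}, \mathrm{8T9}\}$.

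To separate 8T4 from 8T9 (the ``furthermore'' clause), one computes $[L : \Q(\sqrt{W_1}, \sqrt{W_2}, \sqrt{W_3})]$. In the sub-case $W_1 = w_1^2$ a rational square, the relation $\sqrt{z_+ + 2}\cdot\sqrt{z_- + 2} = \pm w_1 \in \Q$ forces these two radicals to generate the same extension, while $\sqrt{z_\pm - 2}$ always contributes an unavoidable degree-$2$ extension over $\Q(\sqrt{W_2}, \sqrt{W_3})$, since both $N(z_+ - 2) = W_2$ and $N((z_+ - 2)/W_2) = 1/W_2$ are non-squares. Hence $[L:\Q] = 8$ (i.e.\ $\Gal(\GG)\simeq$ 8T4) precisely when $\sqrt{z_+ + 2} \in \Q(\sqrt{W_2}, \sqrt{W_3})$, equivalently when $(z_+ + 2)/W_2$ is a square in $\Q(\sqrt{W_3})$. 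Writing $(z_+ + 2)/W_2 = u + v\sqrt{W_3}$ with $u = (-a + 4)/(2 W_2)$ and $v = 1/(2 W_2)$, and using $N = (w_1/W_2)^2$, the standard criterion (an element of norm $n^2$ in a quadratic field is a square iff $(u + n)/2$ or $(u - n)/2$ is a rational square) translates to the stated condition that one of $W_2(-a + 4 \pm 2\sqrt{W_1})$ is a rational square. The sub-case $W_2 = w_2^2$ is symmetric with the roles of $+2$ and $-2$ exchanged (producing $W_1(-a - 4 \pm 2\sqrt{W_2})$). The main obstacle will be the sub-case $W_1 W_2$ a square with neither $W_1$ nor $W_2$ individually a square: here $\Q(\sqrt{W_1}) = \Q(\sqrt{W_2})$ is a single quadratic extension, the relevant square-test is not on an individual $(z_\pm \pm 2)/W_2$ but on a mixed product such as $z_+^2 - 4$ whose conjugate $z_-^2 - 4$ satisfies $(z_+^2 - 4)(z_-^2 - 4) = W_1 W_2$, and after applying the same norm-square criterion to a suitably normalized generator of the degree-$2$ extension in question, the discriminantal expression $(12 - 2b - W_3)^2 - 4 W_1 W_2$ emerges naturally, yielding the three conditions listed in the theorem.
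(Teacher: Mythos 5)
The paper never proves this statement: Theorem \ref{Thm:AP2} is quoted from \cite{Awtrey3} and used as a black box, so there is no internal proof to compare against; judging your proposal on its own terms, the framework (splitting field $L=\Q\bigl(\sqrt{z_\pm+2},\sqrt{z_\pm-2}\bigr)$ over $F=\Q(\sqrt{W_3})$, Kummer theory, and the norm-square criterion) is the right one, and your elimination argument for the first assertion is sound, but the ``furthermore'' part has a genuine gap: you never use irreducibility of $\GG$, and without it your key equivalence is false. Over $F$, one has $\sqrt{z_++2}\in F\bigl(\sqrt{W_2},\sqrt{z_+-2}\bigr)$ if and only if $z_++2$ is congruent to one of $1,\ W_2,\ z_+-2,\ W_2(z_+-2)$ modulo $F^{\times 2}$; the last two are killed by norms ($W_1W_2$ and $W_1W_2^3$ are non-squares), but the first alternative, $z_++2\in F^{\times 2}$, has \emph{square} norm $W_1$ and is not excluded by any of the standing square-class hypotheses, only by irreducibility (it forces $F(\alpha)=F(\sqrt{z_+-2})$, so $\alpha$ has degree $4$). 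Concretely, $(a,b)=(9,25)$ satisfies every square/non-square condition of the first part with $W_1=9$, $W_2=45$, $W_3=-11$; here $-a+4+2\sqrt{W_1}=1$ is a square, $z_++2=\bigl((1+\sqrt{-11})/2\bigr)^2$, and indeed $\GG(x)=(x^4+x^3+5x^2+x+1)(x^4-x^3+5x^2-x+1)$ is reducible. So your ``equivalently when $(z_++2)/W_2$ is a square in $\Q(\sqrt{W_3})$'' must be repaired by showing $z_++2\in F^{\times2}$ contradicts irreducibility; the same repair is needed in the $W_2$ sub-case and in the $W_1W_2$ sub-case, where triviality of the class of $z_+^2-4$ (norm $W_1W_2$, a square) is likewise excluded only by irreducibility.

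The third sub-case is, as you concede, only a sketch, and your guess at how the stated expressions arise is off. The correct completion: with irreducibility, $[L:F]=4$ iff $[z_-+2]\in\langle[z_++2],[z_+-2]\rangle$ in $F^{\times}/F^{\times2}$; norm computations eliminate every option except $(z_+-2)(z_-+2)=b-6+2\sqrt{W_3}\in F^{\times2}$, and since $W_1W_2$ is a rational square, $[W_1]=[W_2]$ in $F^{\times}/F^{\times2}$, this is equivalent to $W_2(z_+^2-4)\in F^{\times2}$, to which the norm-square test (norm $W_1W_2^3=(W_2\sqrt{W_1W_2})^2$) applies and yields exactly the conditions $W_2\bigl(2b+W_3-12\pm2\sqrt{W_1W_2}\bigr)$ being a perfect square. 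The first listed expression is not a discriminantal byproduct at all: identically $(12-2b-W_3)^2-4W_1W_2=a^2W_3$, so $W_2\bigl((12-2b-W_3)^2-4W_1W_2\bigr)=a^2W_2W_3$ can never be a square under the standing hypothesis that $W_2W_3$ is not a square, i.e.\ it is vacuous in context. Smaller slips to fix: $\mathrm{disc}(h)=W_1W_2W_3^2$, not $W_1W_2W_3$ (the paper records $\Delta(g)=W_1W_2W_3^2$); ``$W_1W_2$ non-square $\Rightarrow\Gal(h)\simeq D_4$'' overlooks the $C_4$ possibility, which in context is ruled out only because neither {\rm 8T4} nor {\rm 8T9} has a $C_4$ quotient; and the ``exactly one'' quantifiers need the (easy, but omitted) observation that each displayed pair has product $W_2^2W_3$, $W_1^2W_3$, or $a^2W_2^2W_3$ times a square, a non-square, so at most one member of each pair can be a square.
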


The following theorem, known as \emph{Dedekind's Index Criterion}, or simply \emph{Dedekind's Criterion} if the context is clear, is a standard tool used in determining the monogenicity of a polynomial.
\begin{thm}[Dedekind \cite{Cohen}]\label{Thm:Dedekind}
Let $K=\Q(\theta)$ be a number field, $T(x)\in \Z[x]$ the monic minimal polynomial of $\theta$, and $\Z_K$ the ring of integers of $K$. Let $q$ be a prime number and let $\overline{ * }$ denote reduction of $*$ modulo $q$ (in $\Z$, $\Z[x]$ or $\Z[\theta]$). Let
\[\overline{T}(x)=\prod_{i=1}^k\overline{\tau_i}(x)^{e_i}\]
be the factorization of $T(x)$ modulo $q$ in $\F_q[x]$, and set
\[h_1(x)=\prod_{i=1}^k\tau_i(x),\]
where the $\tau_i(x)\in \Z[x]$ are arbitrary monic lifts of the $\overline{\tau_i}(x)$. Let $h_2(x)\in \Z[x]$ be a monic lift of $\overline{T}(x)/\overline{h_1}(x)$ and set
\[F(x)=\dfrac{h_1(x)h_2(x)-T(x)}{q}\in \Z[x].\]
Then
\[\left[\Z_K:\Z[\theta]\right]\not \equiv 0 \pmod{q} \Longleftrightarrow \gcd\left(\overline{F},\overline{h_1},\overline{h_2}\right)=1 \mbox{ in } \F_q[x].\]
\end{thm}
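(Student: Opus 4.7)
The plan is to prove both implications of this classical criterion by reducing to an $\F_q$-linear algebra question and then either extracting or producing a witness. Set $R = \Z[\theta]$, $S = \Z_K$, and $n = \deg T$. First I would reduce modulo $q$: the condition $q \nmid [S : R]$ is equivalent to $R_{(q)} = S_{(q)}$, and by Nakayama applied to the finitely generated $\Z_{(q)}$-module $S_{(q)}/R_{(q)}$, this is equivalent to injectivity (equivalently, since the $\F_q$-dimensions both equal $n$, to being an isomorphism) of the natural map $R/qR \hookrightarrow S/qS$. So the task becomes: characterize when there exists $\beta \in S \setminus R$ with $q\beta \in R$.

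The algebraic bridge I would exploit is the identity $h_1(\theta)\, h_2(\theta) = q\, F(\theta)$, obtained from $T(\theta)=0$ together with the defining relation $h_1 h_2 = T + qF$. I would also note that $R/qR \cong \F_q[x]/(\overline{T})$ has nilradical generated by the image of $h_1$, since $\overline{h_1} = \prod \overline{\tau_i}$ is the squarefree part of $\overline{T}$, and its maximal ideals are $(q,\overline{\tau_i})$.

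For the reverse implication, suppose $\gcd(\overline{F}, \overline{h_1}, \overline{h_2})$ has an irreducible factor $\overline{\tau_j}$; this forces $e_j \ge 2$ because $\overline{h_2} = \prod \overline{\tau_i}^{\,e_i-1}$. Writing $h_2 = \tau_j \widetilde{h_2} + q r$ with $\widetilde{h_2},r \in \Z[x]$, I would construct the candidate
\[\beta := \frac{h_1(\theta)\,\widetilde{h_2}(\theta)}{q} \in K.\]
The bridge identity gives $\tau_j(\theta)\beta \in R$; combined with $q\beta \in R$ and the fact that $\overline{\tau_j}\mid \overline{h_1}$, I would produce an explicit monic $\Z$-polynomial annihilating $\beta$ (so $\beta \in S$), while a degree comparison $\deg(h_1\widetilde{h_2}) < n$ shows $\beta \notin R$. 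This witnesses $q\mid [S:R]$.

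For the forward direction, I would argue the contrapositive: assuming $\gcd(\overline{F}, \overline{h_1}, \overline{h_2}) = 1$, every $\beta \in S$ with $q\beta \in R$ lies in $R$. Writing $q\beta = P(\theta)$ with $P \in \Z[x]$ of degree less than $n$ and feeding the integrality of $\beta$ through the bridge identity reduced modulo $q$, any hypothetical failure would force $\overline{P}$ to satisfy divisibility constraints in $\F_q[x]/(\overline{T})$ that coalesce into a common irreducible factor of $\overline{F}$, $\overline{h_1}$, and $\overline{h_2}$, contradicting the hypothesis. The main obstacle throughout is the technical verification in the reverse direction that the candidate $\beta$ actually lies in $\Z_K$; producing an explicit monic annihilator over $\Z$ requires careful use of the simultaneous divisibility $\overline{\tau_j} \mid \overline{F},\overline{h_1},\overline{h_2}$ and consistent tracking of how the polynomial lifts $\tau_i$, $h_1$, $h_2$, and $r$ interact modulo $q$.
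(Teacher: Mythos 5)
The paper never proves this statement: it is Dedekind's classical index criterion, quoted with attribution to Cohen's book, so the only meaningful benchmark is the standard textbook proof. Your skeleton does match that proof. The reduction of $q\nmid[\Z_K:\Z[\theta]]$ to the nonexistence of $\beta\in\Z_K\setminus\Z[\theta]$ with $q\beta\in\Z[\theta]$ is correct, the identity $h_1(\theta)h_2(\theta)=qF(\theta)$ is the right bridge, and your witness $\beta=h_1(\theta)\widetilde{h_2}(\theta)/q$ is (modulo an element of $\Z[\theta]$) the classical one; your degree argument that $\beta\notin\Z[\theta]$ also works, since $h_1\widetilde{h_2}$ is monic of degree $n-\deg\tau_j<n$, where $n=\deg T$.

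However, both decisive verifications are left as promissory notes, and one is misdirected. First, to prove $\beta\in\Z_K$ you propose to ``produce an explicit monic $\Z$-polynomial annihilating $\beta$''; that is not how the argument runs and there is no reason such an annihilator is accessible. The standard device is module-finiteness: writing $h_1=\tau_jg'+qs$, $h_2=\tau_j\widetilde{h_2}+qr$, $F=\tau_jF'+qv$ (possible precisely because $\overline{\tau_j}$ divides $\overline{h_1}$, $\overline{h_2}$ and $\overline{F}$) and using $\overline{g'}\,\overline{h_2}=\overline{h_1}\,\overline{\widetilde{h_2}}$, a direct computation with $h_1(\theta)h_2(\theta)=qF(\theta)$ gives $\beta^2\in\Z[\theta]+\Z[\theta]\beta$, so $\Z[\theta][\beta]$ is a finitely generated $\Z$-module and $\beta$ is integral; note that all three divisibility hypotheses get consumed here, which your sketch does not exhibit. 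Second, your forward direction supplies no mechanism: ``divisibility constraints that coalesce into a common irreducible factor'' restates the goal. The two missing ideas are (i) if $\beta=P(\theta)/q\in\Z_K$ with $\deg P<n$, then the characteristic polynomial of $q\beta=P(\theta)$ is congruent to $y^n$ modulo $q$, so $\overline{P}$ is nilpotent in $\F_q[x]/(\overline{T})$ and hence $\overline{h_1}\mid\overline{P}$, the nilradical being $(\overline{h_1})$; and (ii) after multiplying by a suitable cofactor to reach $\gamma\in\Z_K$ with $q\gamma=A(\theta)$ and $\overline{A}=\overline{T}/\overline{\tau_j}$, one needs a prime $\mathfrak{q}$ of $\Z_K$ lying over the maximal ideal $(q,\tau_j(\theta))$ of $\Z[\theta]$ to conclude $F(\theta)\in\mathfrak{q}\cap\Z[\theta]=(q,\tau_j(\theta))$, i.e.\ $\overline{\tau_j}\mid\overline{F}$, which is what manufactures the common factor of $\overline{F}$, $\overline{h_1}$, $\overline{h_2}$. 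Without the characteristic-polynomial (or lying-over) step in (i) and the prime-ideal step in (ii), the forward implication does not close, so as written the proposal has genuine gaps at exactly the two points where the theorem's content lives.
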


The next result is the specific case for our octic situation of a ``streamlined" version of Dedekind's index criterion for trinomials that is due to Jakhar, Khanduja and Sangwan \cite{JKS2}.
\begin{thm}{\rm \cite{JKS2}}\label{Thm:JKS}
Let $K=\Q(\theta)$ be an algebraic number field with $\theta\in \Z_K$, the ring of integers of $K$, having minimal polynomial $\FF(x)=x^{8}+ax^4+b$ over $\Q$. A prime factor $q$ of $\Delta(\FF)=2^{16}b^3(a^2-4b)^4$ does not divide $\left[\Z_K:\Z[\theta]\right]$ if and only if $q$ satisfies on of the following conditions: 
\begin{enumerate}[font=\normalfont]
  \item \label{JKS:I1} when $q\mid a$ and $q\mid b$, then $q^2\nmid b$;
  \item \label{JKS:I2} when $q\mid a$ and $q\nmid b$, then
  \[\mbox{either } \quad q\mid a_2 \mbox{ and } q\nmid b_1 \quad \mbox{ or } \quad q\nmid a_2\left(-ba_2^2-b_1^2\right),\]
  where $a_2=a/q$ and $b_1=\frac{b+(-b)^{q^j}}{q}$ with $q^j\mid\mid 8$;
  \item \label{JKS:I3} when $q\nmid a$ and $q\mid b$, then
  \[\mbox{either } \quad q\mid a_1 \mbox{ and } q\nmid b_2 \quad \mbox{ or } \quad q\nmid a_1b_2^3\left(-aa_1+b_2\right),\]
  where $a_1=\frac{a+(-a)^{q^e}}{q}$ with $q^e\mid\mid 4$, and $b_2=b/q$;
  \item \label{JKS:I4} when $q=2$ and $2\nmid ab$, then the polynomials
   \begin{equation*}
    H_1(x):=x^2+ax+b \quad \mbox{and}\quad H_2(x):=\dfrac{ax^4+b+\left(-ax-b\right)^4}{2}
   \end{equation*}
   are coprime modulo $2$;
         \item \label{JKS:I5} when $q\nmid 2ab$, then $q^2\nmid \left(a^2-4b\right)$.
   \end{enumerate}
\end{thm}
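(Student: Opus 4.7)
The plan is to apply Dedekind's index criterion (Theorem~\ref{Thm:Dedekind}) to $\FF(x)=x^8+ax^4+b$ one prime at a time, splitting according to how $q$ relates to $a$, $b$, and $2$. Because $\Delta(\FF)=2^{16}b^3(a^2-4b)^4$, the primes that can divide $[\Z_K:\Z[\theta]]$ all divide $2b(a^2-4b)$, which cuts into precisely the five cases stated. For each case the program is: (i)~factor $\overline{\FF}(x)$ in $\F_q[x]$; (ii)~choose convenient monic integer lifts $h_1(x),h_2(x)$ with $\overline{h_1}\,\overline{h_2}=\overline{\FF}$; (iii)~compute $F(x)=(h_1(x)h_2(x)-\FF(x))/q$; (iv)~unpack $\gcd(\overline{F},\overline{h_1},\overline{h_2})=1$ into the stated arithmetic condition on $a,b$.

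I would dispose of the two ``clean'' cases first. In Case~\ref{JKS:I1}, $\overline{\FF}(x)=x^8$, so one may take $h_1=x$, $h_2=x^7$, and $F(x)=-(ax^4+b)/q$; the gcd collapses to $F(0)\not\equiv 0\pmod q$, i.e., $q^2\nmid b$. In Case~\ref{JKS:I5}, the assumption $q\mid\Delta(\FF)$ with $q\nmid 2ab$ forces $q\mid a^2-4b$, so $\overline{\FF}(x)\equiv(x^4-r)^2\pmod q$ with $r\equiv -a/2$ and $x^4-r$ squarefree. For any integer lift $R$ of $r$, the identity $4(R^2+aR+b)=(2R+a)^2-(a^2-4b)$ shows that the residue of $\overline{F}$ modulo $x^4-r$ is a constant in $\F_q$ which vanishes iff $q^2\mid(a^2-4b)$, giving the stated criterion.

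For Cases~\ref{JKS:I2} and~\ref{JKS:I3} the factorizations are $\overline{\FF}(x)=x^8+\overline{b}$ (or $(x+1)^8$ for $q=2$ with $a$ even and $b$ odd) and $\overline{\FF}(x)=x^4(x^4+\overline{a})$, respectively. The technical quantities $a_1=(a+(-a)^{q^e})/q$ with $q^e\mid\mid 4$ and $b_1=(b+(-b)^{q^j})/q$ with $q^j\mid\mid 8$ are tailored to be integers that encode $a$ and $b$ modulo $q^2$ at the repeated factor, uniformly across odd $q$ (where they reduce to $0$ and shortcut the criterion) and $q=2$ (where the ``Frobenius'' lift $(-\ast)^{q^e}$ captures the needed second-order data). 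Writing $F(x)$ explicitly in each sub-case and taking its gcd with the separable part of $\overline{\FF}$ then produces the two listed disjunctions ``$q\mid a_2$ and $q\nmid b_1$'' or ``$q\nmid a_2(-ba_2^2-b_1^2)$'' (and their Case~\ref{JKS:I3} analogues) verbatim.

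The subtlest case is Case~\ref{JKS:I4} ($q=2$ with $ab$ odd), where iterating $(x^2+x+1)^2\equiv x^4+x^2+1\pmod 2$ gives $\overline{\FF}(x)=(x^2+x+1)^4$. Taking $h_1=x^2+x+1$ and $h_2=(x^2+x+1)^3$, expanding over $\Z$, and reducing $F(x)\bmod 2$ yields an $\overline{F}\in\F_2[x]$ whose gcd with $x^2+x+1$ is tested in $\F_2[x]/(x^2+x+1)\cong\F_4$; a case check over the four residues of $(a,b)\bmod 4$ shows the gcd is trivial iff $a\equiv 3$ or $b\equiv 3\pmod 4$. The main obstacle is repackaging this in the compact form ``$H_1$ and $H_2$ coprime modulo $2$'': one verifies directly that $\overline{H_1}=x^2+x+1$ for odd $a,b$, and that $\overline{H_2}\bmod 2$ takes exactly the four forms $x^4+x^2+1$, $x^2+1$, $x^4+x^2$, $x^2$ corresponding to $(a,b)\bmod 4\in\{(1,1),(1,3),(3,1),(3,3)\}$, which reproduces the same dichotomy and closes the argument.
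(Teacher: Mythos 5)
Your strategy is sound, but there is nothing in the paper to compare it against: the paper does not prove this statement. Theorem \ref{Thm:JKS} is imported by citation from Jakhar, Khanduja and Sangwan \cite{JKS2}, being the specialization $(N,M)=(8,4)$ of their general trinomial index theorem, with $\Delta(\FF)=2^{16}b^3(a^2-4b)^4$ supplied by Theorem \ref{Thm:Swan}. Your from-scratch derivation via Dedekind's criterion (Theorem \ref{Thm:Dedekind}), one prime at a time with explicit lifts, is the natural route (and is in the spirit of the cited source's own method), and the computations you outline do go through: in Case \eqref{JKS:I1} the gcd collapses to $\overline{F}(0)=-\overline{b/q}\neq 0$, i.e.\ $q^{2}\nmid b$; in Case \eqref{JKS:I5} your identity $4(R^{2}+aR+b)=(2R+a)^{2}-(a^{2}-4b)$ combined with $q\mid(2R+a)$ shows the constant residue of $\overline{F}$ modulo the squarefree polynomial $x^{4}-r$ vanishes exactly when $q^{2}\mid(a^{2}-4b)$; and in Case \eqref{JKS:I4} evaluating $\overline{F}$ at a root of $x^{2}+x+1$ in $\F_{4}$ gives failure precisely when $(a,b)\equiv(1,1)\pmod{4}$, which agrees with the coprimality test on $\overline{H_1},\overline{H_2}$. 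I also confirmed that carrying out the deferred computations in Cases \eqref{JKS:I2} and \eqref{JKS:I3} at $q=2$ reproduces the stated disjunctions: for instance, with $2\mid a$ and $2\nmid b$ one gets $\overline{\FF}=(x+1)^{8}$ and the criterion $F(-1)\not\equiv 0\pmod 2$, i.e.\ $a+b\equiv 1\pmod 4$, which is exactly what the parity analysis of $a_2$ and $b_1=(b+b^{8})/2$ encodes.

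Two points deserve attention. First, in Case \eqref{JKS:I2} an odd prime $q$ with $q\mid a$ and $q\nmid b$ satisfies $a^{2}-4b\equiv-4b\not\equiv 0\pmod{q}$, so it never divides $\Delta(\FF)$ at all; that case is therefore genuinely only about $q=2$, and your remark about the quantities ``reducing to $0$ and shortcutting the criterion'' for odd $q$ is vacuous there --- unlike Case \eqref{JKS:I3}, where odd primes $q\mid b$ really do divide $\Delta(\FF)$ via the factor $b^{3}$ and $a_{1}=0$ correctly collapses the condition to $q^{2}\nmid b$; a careful write-up should note this asymmetry. Second, a harmless transposition in Case \eqref{JKS:I4}: the $x^{4}$-coefficient $\overline{(a^{4}+a)/2}$ of $\overline{H_2}$ depends only on $a\bmod 4$ (it is $1$ iff $a\equiv 1\pmod 4$) and the constant term $\overline{(b^{4}+b)/2}$ only on $b\bmod 4$, so the correct correspondence is $(1,3)\mapsto x^{4}+x^{2}$ and $(3,1)\mapsto x^{2}+1$, the reverse of what you wrote; since both forms are coprime to $x^{2}+x+1$, the dichotomy you draw is unaffected.
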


\begin{rem}
We will find both Theorem \ref{Thm:Dedekind} and Theorem \ref{Thm:JKS} useful in our investigations.
\end{rem}

\begin{thm}\label{Thm:Pasten}
   Let $G(t)\in \Z[t]$, and suppose that $G(t)$ factors into a product of distinct non-constant polynomials $\gamma_i(t)\in \Z[x]$ that are irreducible over $\Z$, such that the degree of each $\gamma_i(t)$ is at most 3.   Define 
   \[N_G\left(X\right)=\abs{\left\{p\le X : p \mbox{ is prime and } G(p) \mbox{ is squarefree}\right\}}.\]
   Then, 
   \begin{equation}\label{Eq:NG}
   N_G(X)\sim C_G\dfrac{X}{\log(X)},
   \end{equation}
   where
   \begin{equation}\label{Eq:CG}
   C_G=\prod_{\ell  \mbox{ \rm {\tiny prime}}}\left(1-\dfrac{\rho_G\left(\ell^2\right)}{\ell(\ell-1)}\right)
   \end{equation}
   and $\rho_G\left(\ell^2\right)$ is the number of $z\in \left(\Z/\ell^2\Z\right)^{*}$ such that $G(z)\equiv 0 \pmod{\ell^2}$.
 \end{thm}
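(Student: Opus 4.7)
The plan is to use a squarefree sieve on prime arguments, combining Dirichlet's theorem on primes in arithmetic progressions with Hooley-type estimates for divisor-type sums over cubic polynomials. The starting point is the identity
\[
\mathbf{1}\{G(p)\ \text{squarefree}\}=\sum_{d^2\mid G(p)}\mu(d),
\]
so that, for a truncation parameter $Y=Y(X)$ to be chosen later,
\[
N_G(X)=\sum_{p\le X}\sum_{\substack{d^2\mid G(p)\\ d\le Y}}\mu(d) \;+\; R_Y(X),
\]
where $R_Y(X)$ is an error bounding the contribution of $d>Y$. Because distinct $\gamma_i(t)$ have only finitely many primes in common (from their resultants), I may absorb these finitely many primes into an $O(1)$ correction and assume throughout that the sifting condition $\ell^2\mid G(p)$ reduces by the Chinese Remainder Theorem to a condition on $p\pmod{\ell^2}$ counted by $\rho_G(\ell^2)$, with $p$ ranging over $(\Z/\ell^2\Z)^{*}$ since $p$ is prime (and different from the finitely many exceptional primes).

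First I would handle the main term. For $d\le Y$, squarefree, write $d=\prod \ell$ and use CRT to get that the number of residues $z\in(\Z/d^2\Z)^{*}$ with $G(z)\equiv 0\pmod{d^2}$ is the multiplicative function $\rho_G(d^2)=\prod_{\ell\mid d}\rho_G(\ell^2)$. Applying the Siegel--Walfisz version of the prime number theorem in arithmetic progressions to each residue class modulo $d^2$ yields
\[
\sum_{p\le X}\sum_{\substack{d^2\mid G(p)\\ d\le Y}}\mu(d)\;=\;\frac{X}{\log X}\sum_{d\le Y}\mu(d)\,\frac{\rho_G(d^2)}{d^2\,\phi(d^2)/d^2}\;+\;\text{error},
\]
and completing the Dirichlet series (using $\rho_G(\ell^2)\le \deg(G)$ for all but finitely many $\ell$) converts the partial sum to the Euler product $C_G$ in \eqref{Eq:CG}, with a harmless tail of size $o(X/\log X)$ provided $Y\to\infty$.

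The main obstacle, and the reason the degree hypothesis $\deg\gamma_i\le 3$ is imposed, is the tail estimate $R_Y(X)$: one must show
\[
\#\{p\le X:\exists\,d>Y\ \text{with}\ d^2\mid G(p)\}=o(X/\log X).
\]
For each factor $\gamma_i(t)$, the contribution of $d\le X^{1/2}$ is handled by a straightforward counting of solutions to $\ell^2\mid\gamma_i(p)$ using the $\rho_{\gamma_i}(\ell^2)$ bound summed over $Y<\ell\le X^{1/2}$. The serious case is $\ell>X^{1/2}$: if $\deg\gamma_i\in\{1,2\}$, the divisibility $\ell^2\mid \gamma_i(p)$ forces $\gamma_i(p)=0$ or $\ell^2\le |\gamma_i(p)|\ll X^{\deg\gamma_i}$, yielding a trivial bound that is easily $o(X/\log X)$. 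For $\deg\gamma_i=3$, this is precisely Hooley's theorem on squarefree values of irreducible cubic polynomials, which I would invoke (in its version for prime arguments, obtained by inserting the indicator $\Lambda(p)/\log p$ and using standard upper bound sieves to control $\#\{p\le X:\ell^2\mid\gamma_i(p)\}$ uniformly in $\ell$ in the range $X^{1/2}<\ell\le X^{3/2}$).

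Choosing $Y=Y(X)$ growing sufficiently slowly (e.g.\ a small power of $\log X$) so that Siegel--Walfisz is applicable uniformly in $d\le Y$, and combining the main term with the bounds on $R_Y(X)$, gives the asserted asymptotic $N_G(X)\sim C_G\,X/\log X$. The positivity and convergence of the Euler product $C_G$ follow from $\rho_G(\ell^2)/(\ell(\ell-1))=O(\ell^{-2})$ for all but finitely many $\ell$, so no genuine issue arises on that side. The single technical heart of the argument is therefore the cubic level-of-distribution estimate; everything else is bookkeeping around Dirichlet's theorem and the Chinese Remainder Theorem.
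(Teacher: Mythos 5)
The paper does not actually prove this statement: it is quoted as a known result, with the remark immediately following it attributing it to the combined work of Helfgott, Hooley and Pasten and pointing to the discussion after Theorem~2.11 of \cite{JonesNYJM}. So the relevant comparison is between your sketch and that body of literature, and there your proposal has a genuine gap precisely at what you yourself call its ``single technical heart.'' The surrounding framework is fine: the M\"obius decomposition, the CRT multiplicativity of $\rho_G$, the use of Siegel--Walfisz for $d\le Y$ with $Y$ a small power of $\log X$, and the resulting Euler product $\prod_\ell\bigl(1-\rho_G(\ell^2)/\phi(\ell^2)\bigr)=C_G$ are all standard and correct bookkeeping. The tail for linear and quadratic factors is also genuinely easy, as you say.

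The problem is the irreducible cubic factor with $\ell$ in the range $X^{1/2}<\ell\ll X^{3/2}$. You assert this ``is precisely Hooley's theorem on squarefree values of irreducible cubic polynomials \dots in its version for prime arguments, obtained by inserting the indicator $\Lambda(p)/\log p$ and using standard upper bound sieves.'' That reduction does not work. Hooley's theorem concerns $\gamma_i(n)$ for integer arguments $n$; restricting to prime arguments destroys the equidistribution input his argument relies on, and an upper-bound sieve applied to $\#\{p\le X:\ell^2\mid\gamma_i(p)\}$ for individual large $\ell$ loses more than a constant once one sums over $\ell>X^{1/2}$ --- the trivial bound already gives $O(X/\ell^2\cdot\rho+X^{\varepsilon})$-type counts whose sum over $X^{1/2}<\ell\ll X^{3/2}$ is not $o(X/\log X)$ without substantial new input. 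Establishing squarefree values of an irreducible cubic at \emph{prime} arguments is exactly the content of Helfgott's Acta Mathematica paper \cite{Helfgott-cubic} (with related conditional and complementary results in \cite{Pasten}); it requires, among other things, a delicate treatment of the large-modulus range via lattice-point counting and repulsion arguments, not a routine modification of Hooley. If you replace your one-sentence dismissal of this range with an explicit appeal to Helfgott's theorem, your outline becomes an accurate account of why the statement is true; as written, the step that ``would fail'' is the claim that the cubic prime-argument case follows from the integer-argument case by standard sieve manipulations.
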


\begin{rem}
  Theorem \ref{Thm:Pasten} follows from work of Helfgott, Hooley and Pasten \cite{Helfgott-cubic,Hooley-book,Pasten}. For more details, see the discussion following \cite[Theorem 2.11]{JonesNYJM}.
\end{rem}

\begin{defn}\label{Def:Obstruction}
 In the context of Theorem \ref{Thm:Pasten}, for $G(t)\in \Z[t]$ and a prime $\ell$, if $G(z)\equiv 0 \pmod{\ell^2}$ for all $z\in \left(\Z/\ell^2\Z\right)^{*}$, we say that $G(t)$ has a \emph{local obstruction at $\ell$}. A polynomial $G(t)\in \Z[t]$ is said to have \emph{no local obstructions}, if for every prime $\ell$ there exists some $z\in \left(\Z/\ell^2\Z\right)^{*}$ such that $G(z)\not \equiv 0 \pmod{\ell^2}$.
\end{defn}

Note that $C_G>0$ in \eqref{Eq:CG} if and only if $G(t)$ has no local obstructions. Consequently, it follows that $N_G(X)\to \infty$ as $X\to \infty$ in \eqref{Eq:NG}, when $G(t)$ has no local obstructions. Hence, we have the
following immediate corollary of Theorem \ref{Thm:Pasten}.
\begin{cor}\label{Cor:Squarefree}
 Let $G(t)\in \Z[t]$, and suppose that $G(t)$ factors into a product of distinct non-constant polynomials $\gamma_i(t)\in \Z[x]$ that are irreducible over $\Z$, such that the degree of each $\gamma_i(t)$ is at most 3. To avoid the situation when $C_G=0$ (in \eqref{Eq:CG}), we suppose further that $G(t)$ has no local obstructions.
  Then there exist infinitely many primes $p$ such that $G(p)$ is squarefree.
\end{cor}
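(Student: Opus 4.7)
The plan is to derive this directly from Theorem \ref{Thm:Pasten} by unpacking the constant $C_G$ in \eqref{Eq:CG}. Since the hypotheses on the factorization of $G(t)$ (distinct irreducible factors, each of degree at most $3$) are identical in the corollary and in Theorem \ref{Thm:Pasten}, the asymptotic formula \eqref{Eq:NG} applies. The task thus reduces to verifying that $C_G > 0$, because once $C_G > 0$, the asymptotic $N_G(X) \sim C_G X/\log(X)$ forces $N_G(X) \to \infty$ as $X \to \infty$, which in particular gives infinitely many primes $p$ with $G(p)$ squarefree.

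First I would recall that $C_G$ is the convergent Euler product
\[
C_G = \prod_{\ell \text{ prime}} \left(1 - \frac{\rho_G(\ell^2)}{\ell(\ell-1)}\right).
\]
Since $\bigl|(\Z/\ell^2\Z)^{*}\bigr| = \ell(\ell-1)$, by the definition of $\rho_G(\ell^2)$ we have $0 \le \rho_G(\ell^2) \le \ell(\ell-1)$, so each factor lies in $[0,1]$. Consequently $C_G = 0$ if and only if some factor equals zero, i.e.\ if and only if there exists a prime $\ell$ with $\rho_G(\ell^2) = \ell(\ell-1)$, meaning that $G(z) \equiv 0 \pmod{\ell^2}$ for every $z \in (\Z/\ell^2\Z)^{*}$. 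By Definition \ref{Def:Obstruction}, this is precisely the statement that $G(t)$ has a local obstruction at $\ell$.

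Under the hypothesis that $G(t)$ has no local obstructions, every factor in the Euler product is strictly positive, so $C_G > 0$. (Convergence of the product is already built into Theorem \ref{Thm:Pasten}.) Plugging $C_G > 0$ into \eqref{Eq:NG} yields $N_G(X) \to \infty$, which concludes the proof.

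There is no real obstacle here: the entire corollary is essentially a reformulation of the positivity criterion for the leading constant in Theorem \ref{Thm:Pasten}. The only thing to be slightly careful about is matching the language of \emph{local obstruction} in Definition \ref{Def:Obstruction} with the exact condition that makes a single Euler factor vanish, and noting that the finiteness of $G(z) \pmod{\ell^2}$ makes this a purely finite check at each prime.
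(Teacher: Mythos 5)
Your proof is correct and follows essentially the same route as the paper, which derives the corollary immediately from Theorem \ref{Thm:Pasten} by observing that $C_G>0$ exactly when $G(t)$ has no local obstructions, so that $N_G(X)\to\infty$. Your additional unpacking of the Euler factors (each lying in $[0,1]$, vanishing precisely at an obstructed prime) just makes explicit what the paper asserts in the remark preceding the corollary.
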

The following lemma, which generalizes a discussion found in \cite{JonesBAMS}, will be useful in the proof of Theorem \ref{Thm:Main}.
\begin{lemma}\label{Lem:ObstructionCheck}
  Let $G(t)\in \Z[t]$ with $\deg(G)=N$, and suppose that $G(t)$ factors into a product of distinct non-constant polynomials that are irreducible over $\Z$, such that the degree of each factor is at most 3. If $G(t)$ has an obstruction at the prime $\ell$, then $\ell\le (N_{\ell}+2)/2$, where $N_{\ell}$ is the number of not-necessarily distinct non-constant linear factors of $G(t)$ in $\F_{\ell}[t]$.
\end{lemma}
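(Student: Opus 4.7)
The plan is to translate the local obstruction at $\ell$ into a multiplicity condition on $\bar G(t) \in \F_\ell[t]$ and then count linear factors. The degree-at-most-$3$ hypothesis is not actually needed for this lemma; it is only present to keep the statement in line with the ambient setting of Corollary \ref{Cor:Squarefree}.

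First I would rephrase the obstruction. By Definition \ref{Def:Obstruction}, an obstruction at $\ell$ means $\ell^2 \mid G(z)$ for every integer $z$ coprime to $\ell$. In particular $\ell \mid G(z)$ for every such $z$, so every $\bar z \in \F_\ell^{*}$ is a root of $\bar G(t)$ in $\F_\ell[t]$.

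Next I would upgrade this to a double-root statement via a binomial/Taylor expansion modulo $\ell^2$. Expanding each monomial $(z_0 + k\ell)^n$ and discarding terms divisible by $\ell^2$ gives, for any integer lift $z_0$ of $\bar z$ and any $k \in \Z$,
\[
G(z_0 + k\ell) \equiv G(z_0) + k\ell\, G'(z_0) \pmod{\ell^2}.
\]
Since $z_0 + k\ell$ is also coprime to $\ell$, the hypothesis forces the left-hand side to be $\equiv 0 \pmod{\ell^2}$ for every $k$. Taking $k = 0$ yields $\ell^2 \mid G(z_0)$, and then taking $k = 1$ yields $\ell\, G'(z_0) \equiv 0 \pmod{\ell^2}$, i.e.\ $\ell \mid G'(z_0)$. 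Hence $\bar G'(\bar z) = 0$ in addition to $\bar G(\bar z) = 0$, so $\bar z$ is a root of $\bar G(t)$ of multiplicity at least $2$.

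Finally, since every element of $\F_\ell^{*}$ is (at least) a double root of $\bar G(t)$, the polynomial $\prod_{\bar z \in \F_\ell^{*}}(t - \bar z)^2 = (t^{\ell-1} - 1)^2$ divides $\bar G(t)$ in the UFD $\F_\ell[t]$. Consequently, the total number of linear factors of $\bar G(t)$ counted with multiplicity satisfies $N_\ell \ge 2(\ell - 1)$, which rearranges to $\ell \le (N_\ell + 2)/2$. The only substantive step is the Taylor-expansion observation; once in hand, the multiplicity count is immediate.
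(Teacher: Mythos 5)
Your proof is correct and follows essentially the same route as the paper: both arguments show that every residue in $\F_{\ell}^{*}$ must be a root of $\overline{G}$ in $\F_{\ell}[t]$ of multiplicity at least $2$, and then count linear factors to obtain $N_{\ell}\ge 2(\ell-1)$. The only (cosmetic) difference is how the double root is certified --- you use the Taylor expansion modulo $\ell^2$ to show $\overline{G}'$ also vanishes, whereas the paper rules out multiplicity $1$ by noting that a simple root modulo $\ell$ lifts to a unique root modulo $\ell^2$ --- and your remark that the degree-at-most-$3$ hypothesis plays no role here is accurate.
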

\begin{proof}
Since no factors of $G(t)$ in $\Z[t]$ are constant, we can assume that the content of every factor of $G(t)$ is 1. 
Furthermore, since a nonlinear factor of $G(t) \Mod{\ell}$ never has a zero in $\left(\Z/\ell^2\Z\right)^*$, we can also assume, without loss of generality, that $G(t)$ factors completely into $N$, not-necessarily distinct, non-constant linear factors in $\Z[t]$. Thus,
\begin{equation}\label{Eq:G(t)}
 G(t)\equiv c\prod_{j=0}^{\ell-1}(t-j)^{e_j} \pmod{\ell},
 \end{equation} where $c\not \equiv 0 \pmod{\ell}$, $e_j\ge 0$ for each $j$ and $N=\sum_{j=0}^{\ell-1}e_j$.
   Observe that if $e_j=0$ for some $j\ne 0$ in \eqref{Eq:G(t)}, then $G(j)\not \equiv 0 \Mod{\ell^2}$, contradicting the fact that $G(t)$ has an obstruction at the prime $\ell$. If $e_j=1$ for some $j\ne 0$ in \eqref{Eq:G(t)}, then the zero $j$ of $x-j \Mod{\ell}$ lifts to the unique zero $j$ of $x-j \Mod{\ell^2}$. Thus, $G(j+\ell)\not \equiv 0 \Mod{\ell^2}$, again contradicting the fact that $G(t)$ has an obstruction at the prime $\ell$. Hence, $e_j\ge 2$ for all $j\in \{1,2,\ldots,\ell-1\}$. Assume, by way of contradiction, that $\ell> (N+2)/2$. Then
  \[2(\ell-1)>N=\sum_{j=0}^{\ell-1}e_j=e_0+\sum_{j=1}^{\ell-1}e_j\ge e_0+2(\ell-1),\] which is impossible, and the proof is complete.
\end{proof}



\section{The Proof of Theorem \ref{Thm:Main}}\label{Section:MainProof}
  In certain situations of the proof of item \eqref{I:2} of Theorem \ref{Thm:Main},  it will be convenient to examine the monogenicity of
\begin{equation}\label{Eq:g}
g(x):=x^4+ax^3+bx^2+ax+1,
\end{equation}
 in light of the fact that
\begin{equation}\label{Eq:gnmGnm}
\mbox{if $g(x)$ is not monogenic, then $\GG(x)=g(x^2)$ is not monogenic.}
\end{equation}
Throughout this section, we let $\GG(x)$ be as defined in \eqref{Eq:G}, and we let $W_1$, $W_2$ and $W_3$ be as defined in \eqref{Eq:Widef}.
Straightforward computations reveal that
\[\Delta(g)=(b+2-2a)(b+2+2a)(a^2-4b+8)^2=W_1W_2W_3^2 \quad \mbox{and}\quad \Delta(\GG)=2^8\Delta(g)^2.\]

Before we present the proof of Theorem \ref{Thm:Main}, we prove some lemmas that will be useful for the proof of item \eqref{I:2} of Theorem \ref{Thm:Main}. The first lemma follows from \cite[Theorem 1.1]{JonesBAMS}.
 \begin{lemma}\label{Lem:BAMS} If
    \[(a \mmod{4}, \ b \mmod{4})\in \{(1,3),(3,1),(3,3)\},\] and $W_1W_2W_3$ is squarefree, then $\GG(x)$ is monogenic.
 \end{lemma}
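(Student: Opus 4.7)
The plan is to invoke \cite[Theorem 1.1]{JonesBAMS} as stated. That theorem gives sufficient conditions for monogenicity of polynomials of the type $\GG(x)$ (an even reciprocal octic), phrased in terms of squarefreeness of a natural ``odd part'' of the discriminant, together with residue conditions on the coefficients modulo $4$ that ensure the prime $2$ behaves well. My job is then to translate the hypotheses of the lemma into the hypotheses of that theorem and quote its conclusion.

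The structural reason the hypotheses should suffice is as follows. From the preceding display, $\Delta(\GG) = 2^{8}\,\Delta(g)^{2} = 2^{8}\, W_{1}^{2}W_{2}^{2}W_{3}^{4}$, so the primes that could cause $[\Z_K : \Z[\theta]]$ to be nontrivial are precisely $2$ and the odd prime divisors of $W_{1}W_{2}W_{3}$. For each odd prime $q \mid W_{1}W_{2}W_{3}$, the squarefreeness of $W_{1}W_{2}W_{3}$ means that $q$ divides exactly one of $W_{1},W_{2},W_{3}$ to order one, so $v_{q}(\Delta(\GG))\in\{2,4\}$, which is the exact regime in which Dedekind's criterion (Theorem~\ref{Thm:Dedekind}) or Theorem~\ref{Thm:JKS} applied to $g$ (combined with $\GG(x)=g(x^{2})$) rules out $q$-index. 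This is the content absorbed into the cited theorem.

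For the prime $2$, the three allowed residue classes of $(a,b) \mmod 4$ all force $a,b$ odd, whence $\GG(x)\equiv \bigl(x^{4}+x^{3}+x^{2}+x+1\bigr)^{2}\pmod{2}$. Setting $\tau(x)=x^{4}+x^{3}+x^{2}+x+1$ (which is irreducible in $\F_{2}[x]$), the Dedekind auxiliary polynomial $F(x)=(\tau(x)^{2}-\GG(x))/2$ must be checked not to share a root with $\tau$ in $\overline{\F_{2}}$. A direct evaluation at a primitive $5$th root of unity $\zeta\in\F_{16}$ (using $\zeta^{5}=1$ and $\tau(\zeta)=0$) gives $F(\zeta)\ne 0$ in each of the three allowed cases, while the excluded case $(a,b)\equiv(1,1)\pmod 4$ does yield $F(\zeta)=0$; this is precisely why those three residue classes, and only those, appear in the hypothesis.

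The main obstacle is cosmetic rather than mathematical: one must verify that the hypothesis formulation in \cite[Theorem 1.1]{JonesBAMS} really is equivalent to ``$(a,b)\mmod 4\in\{(1,3),(3,1),(3,3)\}$ and $W_{1}W_{2}W_{3}$ squarefree.'' Once this dictionary is confirmed, Lemma~\ref{Lem:BAMS} follows immediately by citation; irreducibility of $\GG(x)$ is not a separate burden since squarefreeness of $W_{1}W_{2}W_{3}$ forces $\Delta(\GG)\ne 0$, and the Dedekind analysis above together with the cited theorem delivers $\Delta(\GG)=\Delta(K)$, hence both irreducibility and the equality of discriminants needed for monogenicity.
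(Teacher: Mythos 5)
Your proposal is correct and takes essentially the same route as the paper: the paper's entire justification for this lemma is the single sentence that it ``follows from \cite[Theorem 1.1]{JonesBAMS}.'' Your additional Dedekind-at-$2$ computation (showing $\tau(x)=x^4+x^3+x^2+x+1$ fails to divide $\overline{F}$ exactly in the three listed residue classes and does divide it when $(a,b)\equiv(1,1)\pmod 4$) is accurate and is a useful gloss, but it is supplementary to the citation rather than a different proof.
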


 \begin{lemma}\label{Lem:MainG2}
\text{}
\begin{enumerate}
  \item \label{LMG2:1} If there exists a prime $q$, such that $q^2\mid W_1$ or $q^2\mid W_2$, then $\GG(x)$ is not monogenic.
  \item \label{LMG2:2} If $W_1$ and $W_2$ are squarefree and there exists a prime $q\ge 3$ such that $q^2\mid W_3$, then $\GG(x)$ is not monogenic.
  \item \label{LMG2:3} Suppose that $W_1$ and $W_2$ are squarefree, and that $W_3$ is not divisible by the square of an odd prime. If $(a \mmod{4}, \ b\mmod{4})\in \{(0,1), (2,3)\}$, then $\GG(x)$ is not monogenic.
\end{enumerate}
\end{lemma}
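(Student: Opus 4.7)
The strategy is to invoke Dedekind's Index Criterion (Theorem~\ref{Thm:Dedekind}). The crucial identities are $W_1 = g(-1)$, $W_2 = g(1)$, and $W_3 = a^2 - 4(b-2)$ is the discriminant of the auxiliary quadratic $R(y) := y^2 + ay + (b-2)$ satisfying $g(x) = x^2 R(x + x^{-1})$. For parts~\eqref{LMG2:1} and~\eqref{LMG2:2} it suffices by~\eqref{Eq:gnmGnm} to show that $g(x)$ is not monogenic; for part~\eqref{LMG2:3} we apply Dedekind's criterion to $\GG(x)$ directly at $q = 2$.

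For part~\eqref{LMG2:1}, the case $q^2 \mid W_2$ reduces to $q^2 \mid W_1$ via $x \mapsto -x$ (equivalently, $a \mapsto -a$), which preserves monogenicity and swaps $W_1 \leftrightarrow W_2$. Assume $q^2 \mid W_1$. Since $g'(-1) = -2W_1$, we have $(x+1)^2 \mid \overline{g}(x)$ in $\F_q[x]$. For $q$ odd, $(x+1)$ divides both $\overline{h_1}$ and $\overline{h_2}$, so it suffices to show $(x+1) \mid \overline F$; but $F(-1) = -g(-1)/q = -W_1/q \equiv 0 \pmod q$, and Dedekind's criterion fails. For $q = 2$ the hypothesis $4 \mid W_1$ forces $b$ to be even, giving the two subcases $(a \bmod 2, b \bmod 4) \in \{(0,2),(1,0)\}$, with $\overline{g} = (x+1)^4$ and $\overline{g} = (x+1)^2(x^2+x+1)$, respectively; in each, a brief computation shows $(x+1) \mid \overline{F}$.

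For part~\eqref{LMG2:2}, fix $q \ge 3$ with $q^2 \mid W_3$. Since $q$ is odd, choose $\sigma \in \Z$ with $2\sigma \equiv a \pmod{q^2}$, and set $s(x) := x^2 + \sigma x + 1$. A direct expansion gives
\[
s(x)^2 - g(x) = (2\sigma - a)(x^3 + x) + (\sigma^2 + 2 - b)\, x^2,
\]
and since $\sigma^2 + 2 - b \equiv W_3/4 \equiv 0 \pmod{q^2}$, we obtain $s(x)^2 \equiv g(x) \pmod{q^2}$, so $\overline{g} = \overline{s}^{\,2}$ in $\F_q[x]$. One then checks each of the three possible factorization types of $\overline{s}$ (irreducible; two distinct linear factors; a repeated linear factor) to see that the corresponding $F(x) = (h_1 h_2 - g)/q$ yields $\overline{F}$ divisible by the common factor of $\overline{h_1}$ and $\overline{h_2}$, so $\gcd(\overline F, \overline{h_1}, \overline{h_2}) \ne 1$ and Dedekind's criterion fails.

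For part~\eqref{LMG2:3}, under either $(a,b) \equiv (0,1)$ or $(2,3) \pmod 4$, $a$ is even and $b$ is odd, so $\overline{\GG}(x) = x^8 + x^4 + 1 = (x^2 + x + 1)^4$ in $\F_2[x]$. Taking $h_1(x) = x^2 + x + 1$ and $h_2(x) = (x^2 + x + 1)^3$, a short expansion produces
\[
F(x) \equiv x^2\bigl(c_1 x^4 + c_2 x^2 + c_1\bigr) \pmod 2,
\]
where $c_1 \equiv 1 + a/2 \pmod 2$ and $c_2 \equiv (19 - b)/2 \pmod 2$. The given mod-$4$ hypothesis is exactly the condition $c_1 \equiv c_2 \pmod 2$, under which $c_1 x^4 + c_2 x^2 + c_1$ is either $0$ or $(x^2 + x + 1)^2$ modulo $2$, so $(x^2 + x + 1) \mid \overline{F}$ and Dedekind's criterion fails at $q = 2$. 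The principal obstacle is the uniform treatment in part~\eqref{LMG2:2}: one must arrange the lift $s(x)$ modulo $q^2$ so that $s(x)^2 \equiv g(x) \pmod{q^2}$, and then verify that Dedekind's criterion fails independently of how $\overline{s}$ factors in $\F_q[x]$; the remaining subcase analyses in parts~\eqref{LMG2:1} and~\eqref{LMG2:3} are routine.
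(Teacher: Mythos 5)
Your proof is correct and runs on the same engine as the paper's: Dedekind's criterion applied to $g(x)$ at the prime $q$ in parts \eqref{LMG2:1} and \eqref{LMG2:2} (via \eqref{Eq:gnmGnm}) and a computation at $2$ in part \eqref{LMG2:3}. The differences are organizational but worth recording. In part \eqref{LMG2:1} the paper splits into three cases according to how the cofactor $x^2+(a-2)x+1$ factors modulo $q$ and computes $F$ in each; your observation that $(x+1)^2\mid\overline{g}$ already forces $x+1$ to divide both $\overline{h_1}$ and $\overline{h_2}$, so that only $F(-1)\equiv 0\pmod{q}$ needs checking, collapses the three cases into one line (choose the lifts with $x+1$ as an exact factor of $h_1$ and $h_2$ so that $F(-1)=-g(-1)/q=-W_1/q$ holds literally, not just modulo $q$). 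In part \eqref{LMG2:2} your $\sigma$ is precisely the paper's coefficient $(q^2+1)a/2$, but packaging it as the single congruence $s(x)^2\equiv g(x)\pmod{q^2}$ makes all three factorization types of $\overline{s}$ immediate, including the repeated-root case: there $F\equiv -2r(x)\,\overline{s}\pmod{q}$ is still divisible by the linear factor, so the criterion fails. The paper instead rules out the repeated-root case using the hypothesis that $W_1$ and $W_2$ are squarefree, so your argument actually establishes part \eqref{LMG2:2} without that hypothesis. In part \eqref{LMG2:3} you apply Dedekind directly to $\GG$ with $\overline{\GG}=(x^2+x+1)^4$, where the paper works with the quartic $g$; both computations check out, the paper's being slightly shorter.
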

 \begin{proof}
   For all items of the lemma, it is enough, by \eqref{Eq:gnmGnm}, to show that $g(x)$, as defined in \eqref{Eq:g}, is not monogenic.

   We begin with item \eqref{LMG2:1}, and we assume that $q^2\mid W_1$. We present details only in this case since the case $q^2\mid W_2$ is similar. Because $q^2\mid W_1$, it follows that $b\equiv 2a-2\pmod{q}$ and
\[g(x)\equiv x^4+ax^3+(2a-2)x^2+ax+1\equiv (x+1)^2g_1(x) \pmod{q},\]
where $g_1(x)=x^2+(a-2)x+1$ with $\Delta(g_1)=a(a-4)$. The three possibilities for the quadratic polynomial $g_1(x)$ over $\F_q$ are:
\begin{align}
g_1(x)& \mbox{ is irreducible}, \label{P1}\\
g_1(x)& \mbox{ has a double zero}, \label{P2}\\
g_1(x)&  \mbox{ has two distinct zeros.} \label{P3}
\end{align}
  We use Theorem \ref{Thm:Dedekind} with the prime $q$ and $T(x):=g(x)$ to show in each of these possibilities that $x+1$ divides $\gcd(\overline{F},\overline{h_1},\overline{h_2})$.

For possibility \eqref{P1}, we can let \[h_1(x)=(x+1)g_1(x) \quad \mbox{and}\quad h_2(x)=x+1.\]
Then \[qF(x)=h_1(x)h_2(x)-T(x)=(x+1)^2g_1(x)-g(x)=-(b+2-2a)x^2\equiv 0 \pmod{q^2},\]
from which we conclude that $g(x)$ is not monogenic.

For possibility \eqref{P2}, we have that
\[g_1(x)\equiv \left\{\begin{array}{cl}
   (x+1)^2 \pmod{q} & \mbox{if and only if $q\mid (a-4)$}\\
  (x-1)^2 \pmod{q} & \mbox{if and only if $q\mid a$.}
\end{array}\right.\] Thus, we can choose $h_1(x)$ and $h_2(x)$ so that
\[h_1(x)h_2(x)=\left\{\begin{array}{cl}
  (x+1)^4 & \mbox{if and only if $q\mid (a-4)$,}\\
  (x+1)^2(x-1)^2 & \mbox{if and only if $q\mid a$,}
\end{array}\right.\] and
 \[qF(x)= \left\{\begin{array}{cl}
  -((a-4)x^2+(b-6)x+a-4) & \mbox{if and only if $q\mid (a-4)$}\\
  -(ax^2+(b+2)x+a) & \mbox{if and only if $q\mid a$.}
\end{array}\right.\]
Since, for both of these cases, we have that
\[qF(-1)=b+2-2a\equiv 0 \pmod{q^2},\]
it follows that $g(x)$ is not monogenic for this possibility.

Finally, for possibility \eqref{P3}, suppose that $g_1(x)\equiv (x-c)(x-d) \pmod{q}$, with $c\not \equiv d \pmod{q}$. Note that
\[g_1(-1)=(-1)^2+(a-2)(-1)+1=-(a-4)\not \equiv 0 \pmod{q},\]
so that $c\not \equiv -1 \pmod{q}$ and $d\not \equiv -1 \pmod{q}$.  Then, we can let
\[h_1(x)=(x+1)(x-c)(x-d)\quad \mbox{and} \quad h_2(x)=x+1.\] Then,
\[h_1(x)h_2(x)=(x+1)^2(x-c)(x-d),\] so that
\begin{align*}
qF(x)&=(x+1)^2(x-c)(x-d)-g(x)\\
&=(-c+2-a-d)x^3+(-2c+1+dc-2d-b)x^2\\
&\qquad \qquad +(-d+2dc-a-c)x+dc-1.
\end{align*}
Observe then that
\[qF(-1)=-(b+2-2a)\equiv 0 \pmod{q^2}.\]
Hence, $g(x)$ is not monogenic in this possibility as well, and the proof of item \eqref{LMG2:1} is complete.

We turn next to item \eqref{LMG2:2}, and we let $q\ge 3$ be a prime divisor of $W_3=a^2-4b+8$ such that $q^2\mid W_3$. Then $b\equiv (a^2+8)/4 \pmod{q}$ and
    \[g(x)\equiv (x^2+(a/2)x+1)^2 \pmod{q}.\]  We consider the three possibilities for the polynomial
    \begin{equation}\label{Eq:g1}
    g_1(x):=x^2+(a/2)x+1\in \F_q[x]:
    \end{equation}
  \begin{enumerate}
  \item \label{Poss1} $g_1(x)$ is irreducible,
  \item \label{Poss2} $g_1(x)$ has a double zero,
  \item \label{Poss3} $g_1(x)$ has two distinct zeros.
\end{enumerate}
 We use Theorem \ref{Thm:Dedekind} with the prime $q$ and $T(x):=g(x)$ to examine each of these possibilities.

If $g_1(x)$ is irreducible in $\F_q[x]$, then we can let
\[h_1(x)=h_2(x)=x^2+\left(\dfrac{q^2+1}{2}\right)ax+1\in \Z[x],\] so that
\begin{align*}
qF(x)&=h_1(x)h_2(x)-T(x)\\
&=\left(x^2+\left(\dfrac{q^2+1}{2}\right)ax+1\right)^2-g(x)\\
&=-x\left(-aq^2x^2+\left(b-2-\dfrac{(q+1)^2}{4}a^2\right)x-aq^2\right)\\
&\equiv \left(\dfrac{a^2-4b+8}{4}\right)x^2 \pmod{q^2}\\
&\equiv 0 \pmod{q^2}.
\end{align*}
Hence, $g(x)$ is not monogenic.

Suppose next that $g_1(x)$ has a double zero in $\F_q[x]$. Then $q\mid (a^2-16)$, since $\Delta(g_1)=(a^2-16)/4$. Thus, $q\mid(b-6)$ since
$a^2-16-(a^2-4b+8)=4(b-6)$ and $q\ge 3$. Consequently,
 \[W_1W_2=(b+2)^2-4a^2\equiv (b+2)^2-4(4b-b)\equiv (b-6)^2 \equiv 0 \pmod{q^2}.\] Hence, since $W_1$ and $W_2$ are squarefree, it follows that $q\mid \gcd(W_1,W_2)$. Then $q\mid a$, since $q$ divides $W_1-W_2=-4a$, and $q\ge 3$. Thus, $q$ divides $a^2-(a^2-16)=16$, which contradicts the fact that $q\ge 3$. Therefore, $g_1(x)$ never has a double zero in $\F_q[x]$ when $q\ge 3$.

 Next, with $g_1(x)$ as defined in \eqref{Eq:g1}, suppose that $g_1(x)\equiv (x-c)(x-d) \pmod{q}$, where $c\not \equiv d \pmod{q}$. Then

 \[x^2+\left(\dfrac{q^2+1}{2}\right)ax+1=(x-c)(x-d)-qr(x),\] for some linear polynomial $r(x)\in \Z[x]$. Thus, with $T(x)=g(x)$ and
 \[h_1(x)=h_2(x)=(x-c)(x-d)=x^2+\left(\dfrac{q^2+1}{2}\right)ax+1+qr(x),\] a straightforward computation reveals that
 \begin{align*}
 F(x)&=\dfrac{h_1(x)h_2(x)-T(x)}{q}\\
   &=aqx^3+\left(\dfrac{a^2-4b+8}{q}+\dfrac{a^2(q^3+2q)}{4}+2r(x)\right)x^2\\
   &\qquad \qquad \qquad +(aq^2r(x)+aq+ar(x))x+qr(x)^2+2r(x)\\
   &\equiv 2r(x)\left(x^2+\left(\dfrac{a}{2}\right)x+1\right) \pmod{q}\\
   &\equiv 2r(x)(x-c)(x-d) \pmod{q},
 \end{align*}
 which implies that $\gcd(\overline{F},\overline{h_1})\ne 1$. Hence, again, $g(x)$ is not monogenic, and the proof of item \eqref{LMG2:2} is complete.

 Finally, we consider item \eqref{LMG2:3}. Since $2\mid a$, it follows that $2\mid W_3$, so that $2\mid \Delta(g)$. Then, we apply Theorem \ref{Thm:Dedekind} with $T(x):=g(x)$ and $q=2$. Since $(a \mmod{4}, \ b\mmod{4})\in \{(0,1), (2,3)\}$, we have that $\overline{T}(x)=(x^2+x+1)^2$. Therefore, we can let $h_1(x)=h_2(x)=x^2+x+1$ since $x^2+x+1$ is irreducible in $\F_2[x]$.  Hence,
  \begin{align*}
 F(x)&=\dfrac{h_1(x)h_2(x)-T(x)}{q}\\
 &=\dfrac{(x^2+x+1)^2-g(x)}{2}\\
 &=-x\left(\left(\frac{a}{2}\right)x^2+\left(\frac{b-1}{2}\right)x+\frac{a}{2}\right)\\
 &\equiv \left\{\begin{array}{cl}
   0 \pmod {2}& \mbox{if $(a \mmod{4}, \ b\mmod{4})=(0,1)$}\\[.5em]
   x(x^2+x+1) \pmod{2} & \mbox{if $(a \mmod{4}, \ b\mmod{4})=(2,3)$.}
 \end{array}\right.
 \end{align*}
 Thus, we see in either case, that $\gcd(\overline{F},\overline{h_1})\ne 1$ so that $g(x)$ is not monogenic, and the proof of the lemma is complete.
\end{proof}

\begin{proof}[Proof of Theorem \ref{Thm:Main}]
We follow closely the classifications given in Theorems \ref{Thm:Chen1}, \ref{Thm:Chen2}, \ref{Thm:Awtrey}, \ref{Thm:AP1} and \ref{Thm:AP2}. To determine the monogenic polynomials, we use Theorem \ref{Thm:JKS} for the proof of item \eqref{I:1}, and Theorem \ref{Thm:Dedekind} for the proof of item \eqref{I:2}. 

We begin with the proof of item \eqref{I:1}.
From Theorem \ref{Thm:Swan}, we have that
\begin{equation}\label{Eq:DelF}
\Delta(\FF)=2^{16}b^3(a^2-4b)^4.
\end{equation}
We see from the classification for $\FF(x)$ given in Theorem \ref{Thm:Chen1}, Theorem \ref{Thm:Chen2} and Theorem \ref{Thm:Awtrey} 
 that $X_{\FF}$ in \eqref{Eq:X} can be partitioned into the two sets
\begin{equation}\label{Eq:Partition}
X_{\FF}^{\square}=\{2,3,4,9,11,22\} \quad \mbox{and} \quad X_{\FF}^{\not\square}=\{6,8,15,16,17,26\},
\end{equation}
where each element in $X_{\FF}^{\square}$ arises from polynomials $\FF(x)$ with $b$ a square, while each element in $X_{\FF}^{\not\square}$ arises from polynomials $\FF(x)$ with $b$ not a square. This partitioning is useful for the following reason. Suppose that $b$ is a square, and $q$ is a prime that divides $b$. We see then that condition \eqref{JKS:I1} of Theorem \ref{Thm:JKS} is false if $q\mid a$, while condition \eqref{JKS:I3} of Theorem \ref{Thm:JKS} is false if $q\nmid a$. Thus, the only possible monogenic polynomials in these cases must have $b=1$. Consequently, we have immediately that the situations in the classification where $b$ is a square and $\sqrt{b}$ is not a square (see Theorem \ref{Thm:Chen2}) have no monogenic polynomials $\FF(x)$.
In particular, there are no monogenic polynomials $\FF(x)$ having Galois group 8T11 or 8T22.

We systematically proceed through the values of X in $X_{\FF}\setminus \{11,22\}$, beginning with the values of X in $X_{\FF}^{\square}\setminus \{11,22\}$ given in \eqref{Eq:Partition}. 
\subsection{Values of X in $X_{\FF}^{\square}\setminus \{11,22\}$}

We assume that $b=1$, and we use Theorem \ref{Thm:Chen1}.
\subsection*{X=2} We assume that $\Gal(\FF)\simeq$ 8T2.
Then, from Theorem \ref{Thm:Chen1}, we have that $-a^2+4$ is a square. If $q$ is an odd prime prime dividing $-a^2+4$, we have that $q\nmid 2a$, and we conclude that condition \eqref{JKS:I5} of Theorem \ref{Thm:JKS} is false since $-a^2+4$ is a square. Thus, we must have that $-a^2+4=2^{2m}$, for some integer $m\ge 0$. Therefore,
\[4-2^{2m}=a^2\ge 0,\]
which implies that
 $m=1$ and $a=0$. Hence, the only monogenic polynomial is this case is the cyclotomic polynomial $\Phi_{16}(x)=x^8+1$.
\subsection*{X=3} We assume that $\Gal(\FF)\simeq$ 8T3. By Theorem \ref{Thm:Chen1}, we have that $a+2$ is a square.
Suppose that $q$ is an odd prime dividing $a+2$. Then $q\nmid 2a$, and since $a+2$ is a square, we deduce that $q^2\mid (a^2-4)$, making condition \eqref{JKS:I5} of Theorem \ref{Thm:JKS} false.

Hence, $a+2=2^{2m}$, for some integer $m\ge 0$.
If $m\ge 1$, then $2\mid a$, but $4 \nmid a$. Thus, $2\nmid a_2$ and $b_1=1$. Since
\[a_2\left(a_2^2-(-1)^2\right)\equiv 0 \pmod{2},\] we conclude that condition \eqref{JKS:I2} of Theorem \ref{Thm:JKS} is false.
If $m=0$, then $a=-1$, and it is easy to check that $\FF(x)=x^8-x^4+1$ is indeed monogenic. Therefore, there is exactly one monogenic polynomial in this situation.
\subsection*{X=4} We assume that $\Gal(\FF)\simeq$ 8T4. Using Theorem \ref{Thm:Chen1}, the approach  is similar to X=3, and so we omit the details. The only monogenic in this case is $\FF(x)=x^8+3x^4+1$.
\subsection*{X=9} We employ a slightly different strategy here. Let $G(t)=(4t+1)(4t+5)$. We claim that $G(t)$ has no obstructions. By Lemma \ref{Lem:ObstructionCheck}, we only have to check for obstructions at the prime $\ell=2$, which is easily confirmed since $G(1)\not \equiv 0 \pmod{4}$. Hence, by Corollary \ref{Cor:Squarefree}, there exist infinitely many primes $p$ such that $G(p)$ is squarefree. Let $p$ be such a prime. We claim that $\FF(x)=x^8+(4p+3)x^4+1$ is monogenic with $\Gal(\FF)\simeq $ XT9. We see from \cite[Lemma 3.1]{JonesBAMS} that $\FF(x)$ is irreducible.  Since $(4p+1)(4p+5)$ is squarefree, it follows from Theorem \ref{Thm:Chen1} that $\Gal(\FF)\simeq $ 8T9. We use Theorem \ref{Thm:JKS} to verify that $\FF(x)$ is monogenic. Note that
\[\Delta(\FF)=2^{16}(4p+1)^4(4p+5)^4\] from \eqref{Eq:DelF}. If $q$ is an odd prime dividing $\Delta(\FF)$, then $q\nmid 2(4p+3)$; and every condition of Theorem \ref{Thm:JKS} is true, including condition \eqref{JKS:I5} since $(4p+1)(4p+5)$ is squarefree. If $q=2$, then $2\nmid (4p+3)$ and we need to check condition \eqref{JKS:I4} of Theorem \ref{Thm:JKS}. An easy calculation reveals that
\[ H_2(x)= \dfrac{(4p+3)x^4+1+(-(4p+3)x-1)^4}{2}\equiv (x+1)^2 \pmod{2}. \]
  Since $H_1(x)=x^2+(4p+3)x+1\equiv x^2+x+1 \pmod{2}$ is irreducible in $\F_2[x]$, we deduce that condition \eqref{JKS:I4} of Theorem \ref{Thm:JKS} is true.
 Thus, $\FF(x)$ is monogenic. By equating discriminants, Maple confirms that there does not exist a prime $p^{\prime}$ such that the octic field generated by $x^8+(4p'+3)x^4+1$ is isomorphic to the octic field generated by $\FF(x)=x^8+(4p+3)x^4+1$. Thus, the set of all such $\FF(x)$, where $(4p+1)(4p+5)$ is squarefree does indeed represent an infinite family of distinct monogenic 8T9-octic trinomials.
\begin{rem}
  We point out to the reader that the family of monogenic polynomials described above does not constitute all monogenic polynomials $\FF(x)=x^8+ax^4+b$ with $\Gal(\FF)\simeq$ 8T9.
\end{rem}
 \subsection{Values of X in $X_{\FF}^{\not\square}$}
 We turn now to  values of X in $X_{\FF}^{\not\square}$ given in \eqref{Eq:Partition}. We assume that $b$ is not a square, and we use Theorem \ref{Thm:Awtrey}. \subsection*{X=6} Following Theorem \ref{Thm:Awtrey}, we suppose that neither $b(a^2-4b)$ nor $4b-a^2$ is a square, and that
$\Gal(\FF)\simeq$ 8T6. Then one of
\begin{equation}\label{Eq:possiblesquares}
2\sqrt{-b},\quad 4b+2\sqrt{-b(a^2-4b)}, \quad 4b-2\sqrt{-b(a^2-4b)}
\end{equation}
is a nonzero square.

We consider the three possibilities in \eqref{Eq:possiblesquares} one at a time, but first we make the following useful observation.
Suppose that $q$ is a prime such that $q^2\mid b$. If $q\mid a$, then condition \eqref{JKS:I1} of Theorem \ref{Thm:JKS} is false. If $q\nmid a$, then $b_2=b/q\equiv 0 \pmod{q}$, which implies that condition \eqref{JKS:I3} of Theorem \ref{Thm:JKS} is false. Hence, if $\FF(x)$ is monogenic, then $b$ must be squarefree. 
Thus, since $-b$ is squarefree, we see easily that the first possibility in \eqref{Eq:possiblesquares} is impossible.

Suppose next that $4b+2\sqrt{-b(a^2-4b)}$ is a nonzero square, which implies that $-b(a^2-4b)$ is a square. Let \[d:=\gcd(-b,a^2-4b).\]

Consider first the case when $-b>0$.
 If $d=1$, then $a^2-4b$ is a square, since $-b(a^2-4b)$ is a square, which yields the contradiction that $\FF(x)=(x^4)^2+a(x^4)+b$ is reducible. Thus, $d>1$. Suppose that $d$ has exactly $n\ge 1$ distinct prime divisors
$r_1<r_2<\cdots <r_n$. Then $d=\prod_{i=1}^nr_i$ since $b$ is squarefree. 
 Futhermore, since $-b(a^2-4b)$ is a square and $\gcd(-b/d,(a^2-4b)/d)=1$, it follows that 
\begin{equation*}\label{Eq:S1S2}
-b=d \quad \mbox{and} \quad a^2-4b=Sd,
\end{equation*}
 where $S$ is a square.
 Thus,
 \begin{equation}\label{Eq:a^2}
   a^2=(S-4)d,
 \end{equation} which implies that $S\ge 4$.  Let $q$ be a prime divisor of $S$, so that $q^2\mid (a^2-4b)$ since $S$ is a square. Suppose that $q\ge 3$. Observe that if $q\mid a$ or $q\mid b$, then $q^2\mid b$, which contradicts the fact that $b$ is squarefree. Therefore, it must be that $q=2$, and we can assume that $S=2^{2k}$, for some integer $k\ge 1$. Thus, $2\mid a$, and  we have from \eqref{Eq:a^2} that
 \begin{equation}\label{Eq:a2}
 (a/2)^2=d(4^{k-1}-1).
 \end{equation} 

 Suppose that $2\mid b$. Then $r_1=2$ and
 \begin{equation}\label{Eq:a}
 (a/2)^2=2\left(\prod_{i=2}^nr_i\right)\left(4^{k-1}-1\right).
 \end{equation} If $k>1$, then $4^{k-1}-1\not \equiv 0\pmod{2}$ which implies that \eqref{Eq:a} is impossible, since the right-hand side of \eqref{Eq:a} is not a square. If $k=1$, then $a=0$, $-b(a^2-4b)=4d^2$ and
 \begin{equation}\label{Eq:contra}
 4b+2\sqrt{-b(a^2-4b)}=-4d+2(2d)=0,
 \end{equation}
 contradicting the fact that $4b+2\sqrt{-b(a^2-4b)}$ is a nonzero square.

 We conclude therefore that $2\nmid b$. It $k>1$, then we see from \eqref{Eq:a2} that $2\mid \mid a$ and $(a/2)^2\equiv b\equiv 1 \pmod{4}$.   Thus, condition \eqref{JKS:I3} of Theorem \ref{Thm:JKS} is false since \[\frac{a}{2}\equiv 1 \pmod{2} \quad \mbox{and} \quad -b\left(\frac{a}{2}\right)^2-\left(-\frac{b+b^8}{2}\right)^2\equiv 0 \pmod{2}.\]
   It follows that there are no monogenic polynomials $\FF(x)$ in the case when $-b>0$.

   Consider next the case that $b>0$. Then $4b-a^2>0$, and since $-b(a^2-4b)=b(4b-a^2)$ is a square, we get the contradiction that $b$ is a square if $d=1$. Hence, $d>1$. Arguing as in the case $-b>0$, 
   we arrive at
    \begin{equation}\label{Eq:a^2b>0}
   (a/2)^2=(1-4^{k-1})d.
 \end{equation}
 Thus, $k=1$ and $a=0$. Then
      \[4b+2\sqrt{b(4b-a^2)}=8b,\] which implies that $b=2$ since $8b$ is a nonzero square and $b$ is squarefree. It is straightforward to verify that  $\FF(x)=x^8+2$ is monogenic and $\Gal(\FF)\simeq $8T6. Thus, there is only one mongenic polynomial $\FF(x)$ in this case.

   The arguments for the final possibility from \eqref{Eq:possiblesquares} are similar to the previous possibility, and so we omit the details. No further monogenic polynomials exist for this possibility.

  \subsection*{X=8} Following Theorem \ref{Thm:Awtrey}, we assume that none of $b$, $b(a^2-4b)$ and $4b-a^2$ is a square, that either $-b$ or $-b(a^2-4b)$ is a square, and that $\Gal(\FF)\simeq $ 8T8. Then either
  \begin{equation*}\label{Eq:PossSquares}
    2(a^2-4b)\sqrt{-b} \quad \mbox{or} \quad -4b+2\sqrt{-b(a^2-4b)}
  \end{equation*} is a nonzero square. 

  We assume first that $2(a^2-4b)\sqrt{-b}$ is a nonzero square. Then $-b$ is a square. 
  Suppose that $q$ is a prime dividing $b$. Then $q^2\mid b$. Thus,  condition \eqref{JKS:I1} of Theorem \ref{Thm:JKS} is false if $q\mid a$, while condition \eqref{JKS:I3} of Theorem \ref{Thm:JKS} is false if $q\nmid a$. Hence, $b=-1$, and $2(a^2+4)$ is a square. Suppose that $q\ge 3$ is a prime divisor of $a^2+4$. Then $q^2\mid (a^2+4)$ and $q\nmid 2a$, which implies that condition \eqref{JKS:I5} of Theorem \ref{Thm:JKS} is false. Therefore, we may assume that $a^2+4=2^k$, for some odd integer $k\ge 3$. Then, if $k>3$, we have that
  \[(a/2)^2=2^{k-2}-1\equiv 3 \pmod{4},\]
  which is impossible. Hence, $k=3$ and $a=\pm 2$. It is easy to verify that the two trinomials $\FF(x)=x^8\pm 2x^4-1$ are indeed monogenic and $\Gal(\FF)\simeq $ 8T8. However, Magma confirms that they generate isomorphic octic fields, distinct from the octic field generated by $\FF(x)=x^8-2$.

  Next, we assume that $-4b+2\sqrt{-b(a^2-4b)}$ is a nonzero square. It follows that $-b(a^2-4b)$ is a square. 
  Suppose that $q$ is a prime divisor of $b$. If $q^2\mid b$, then condition \eqref{JKS:I1} of Theorem \ref{Thm:JKS} is false if $q\mid a$, while condition \eqref{JKS:I3} of Theorem \ref{Thm:JKS} is false if $q\nmid a$. Hence, we assume that $b$ is squarefree. Thus, if $2\mid b$, then $2\mid (a^2-4b)$ since $-b(a^2-4b)$ is a square. Hence, $2\mid a$. Suppose that $2\nmid b$. If $2\nmid a$, then $-b(a^2-4b)\equiv 1 \pmod{4}$, so that
  \[-4b+2\sqrt{-b(a^2-4b)} \equiv 2 \pmod{4},\]
  which is impossible since $-4b+2\sqrt{-b(a^2-4b)}$ is a square. Hence, $2\mid a$ for any value of $b$. Also, if $b>0$, then
  $4b^2-a^2b<4b^2$, so that
  \[-4b+2\sqrt{-b(a^2-4b)}<-4b+2\sqrt{4b^2}=0,\]
  which is impossible since $-4b+2\sqrt{-b(a^2-4b)}$ is a square. Thus, $b<0$.

  Next, let $q\ge 3$ be a prime divisor of $a^2-4b$, and suppose that $q^2\mid (a^2-4b)$. Observe that if $q\mid a$ or $q\mid b$, then $q^2\mid b$, which contradicts the fact that $b$ is squarefree. Hence, since $2\mid a$, we have that
  \[a^2-4b=2^k \quad \mbox{or} \quad a^2-4b=2^k\prod_{i=1}^nr_i,\]
  where $k\ge 2$, $n\ge 1$ and the $r_i$ are distinct odd primes. Note that if $a^2-4b=2^k$ with $2\mid k$, then $-b=1$ since $b$ is squarefree and $-b(a^2-4b)$ is a square, which contradicts the fact that $-b$ is not a square. Therefore, we have that $2\nmid k$ for the possibility $a^2-4b=2^k$. Hence, since
  \[-b(a^2-4b)=-b\left(2^k\prod_{i=1}^nr_i\right),\]
  is a square, it follows that the possibilities for $-b$ are
  \[-b\in \left\{2,\quad 2\prod_{i=1}^nr_i,\quad \prod_{i=1}^nr_i\right\}.\]

  The possibility $-b=2$ occurs if $a^2-4b=2^k$, where $2\nmid k$. That is, we have $a^2=2^3(2^{k-3}-1)$, which implies that $k=3$, $a=0$ and $\FF(x)=x^8-2$. It is easily verified that $\FF(x)$ is monogenic with $\Gal(\FF)\simeq$ 8T8.

  The next possibility is $-b=2\prod_{i=1}^nr_i$ yields
  \[a^2=2^2\left(-2b\right)\left(2^{k-3}-1\right),\] which implies that $k=3$ and $a=0$. But then,
  \[-4b+2\sqrt{-b(a^2-4b)}=2^3(-2b),\]
  which contradicts the fact that $-4b+2\sqrt{-b(a^2-4b)}$ is a square. Hence, there are no monogenic polynomials for this possibility.

  The last possibility is $-b=\prod_{i=1}^nr_i$. In this case, we have that
  \[a^2=2^2(-b)\left(2^{k-2}-1\right)\]
   and
   \[-4b+2\sqrt{-b(a^2-4b)}=2^2(-b)\left(2^{(k-2)/2}+1\right).\] Hence,
   \[A:=(-b)\left(2^{k-2}-1\right) \quad \mbox{and}\quad B:=(-b)\left(2^{(k-2)/2}+1\right),\]
   are squares. Thus, $AB$ is a square, which implies that
   \[\left(2^{k-2}-1\right)\left(2^{(k-2)/2}+1\right)=\left(2^{(k-2)/2}-1\right)\left(2^{(k-2)/2}+1\right)^2,\]
   is a square, which in turn implies that $2^{(k-2)/2}-1$ is a square. By Theorem \ref{Thm:Ljunggren}, it follows that $k\in \{2,4\}$. When $k=2$, we get that $B=-2b$, which contradicts the fact that $B$ is a square. When $k=4$, we get that $b=-3$, $a=\pm 6$ and $\FF(x)=x^8\pm 6x^4-3$. Although $\Gal(\FF)\simeq$ 8T8 for both of these polynomials, neither one is monogenic, which can be seen in the following way. Observe that $2\mid a$ and $2\nmid b$. Hence, since
   \[a_2=a/2 \equiv 1 \Mod{2} \quad \mbox{and} \quad b_1=\frac{-b+(-b)^8}{2}=\frac{-3+(-3)^8}{2}\equiv 1 \Mod{2},\] we see that  $-ba_2^2-b_1^2\equiv 0 \Mod{2}$, and therefore,  condition \eqref{JKS:I2} of Theorem \ref{Thm:JKS} is false with $q=2$.

   In conclusion, there are exactly two monogenic polynomials
   \[\FF(x)\in \{x^8-2x^4-1, \quad x^8-2\}\] with $\Gal(\FF)\simeq$ 8T8.

\subsection*{X=15} Following Theorem \ref{Thm:Awtrey}, we assume that none of $b$, $b(a^2-4b)$ and $4b-a^2$ is a square, and that $\Gal(\FF)\simeq$ 8T15. Then, either $-b$ or $-b(a^2-4b)$ is a square, and none of
\[2\sqrt{-b},\quad 2(a^2-4b)\sqrt{-b},\quad 4b+2\sqrt{-b(a^2-4b)},\] \[4b-2\sqrt{-b(a^2-4b)}, \quad -4b+2\sqrt{-b(a^2-4b)}\]
is a nonzero square.

We assume first that $-b$ is a square. Suppose that $q$ is a prime dividing $b$. Then $q^2\mid b$. Thus,  condition \eqref{JKS:I1} of Theorem \ref{Thm:JKS} is false if $q\mid a$, while condition \eqref{JKS:I3} of Theorem \ref{Thm:JKS} is false if $q\nmid a$. Hence, $b=-1$. Let $D:=a^2+4$, and let $q\ge 3$ be a prime divisor of $D$. Note that $q\nmid 2a$. If $q^2\mid D$, then condition \eqref{JKS:I5} of Theorem \ref{Thm:JKS} is false. Thus, we may assume that $D/2^{\nu_2(D)}$ is squarefree. If $4\mid a$, then $a_2\equiv 0\pmod{2}$, and since $b_1=0$, we see that condition \eqref{JKS:I2} of Theorem \ref{Thm:JKS} is false with $q=2$. Hence, we can also assume that $4\nmid a$. Thus,
\begin{equation}\label{Eq:D8T15}
D=\left\{\begin{array}{cl}
  \prod_{i=1}^nr_i & \mbox{if $a\equiv 1 \Mod{2}$}\\[1em]
  2^3\prod_{i=1}^nr_i & \mbox{if $a\equiv 2 \Mod{4}$,}\\
\end{array}\right.
\end{equation}
where $n\ge 1$ and the $r_i$ are distinct odd primes. Define
\begin{equation}\label{Eq:G8T15}
G(t):=\left\{\begin{array}{cl}
  t^2+4 & \mbox{if $a\equiv 1 \Mod{2}$}\\[1em]
  t^2+1 & \mbox{if $a\equiv 2 \Mod{4}$.}\\
\end{array}\right.
\end{equation}
By Lemma \ref{Lem:ObstructionCheck}, we only have to check for obstructions at the prime $\ell=2$. Since $G(1)\not \equiv 0 \pmod{4}$ in each case of $G(t)$ in \eqref{Eq:G8T15}, we deduce that $G(t)$ has no obstructions. Hence, from Corollary \ref{Cor:Squarefree}, it follows  that there exist infinitely many values of $a$, in each case of $a$ in \eqref{Eq:D8T15}, for which such a value of $D$ exists.

We claim if $4\nmid a$, $b=-1$ and $D$ is as defined in \eqref{Eq:D8T15}, together with the restrictions set forth above from Theorem \ref{Thm:Awtrey}, then $\FF(x)=x^8+ax^4-1$ is monogenic with $\Gal(\FF)\simeq$ 8T15. From \eqref{Eq:DelF}, we have that $\Delta(\FF)=-2^{16}D^4$. To establish the claim, suppose first that $q$ is a prime divisor of $D$. If $q\ge 3$, then $q\nmid 2ab$, and it is easy to see that condition \eqref{JKS:I5} and, consequently, all other conditions of Theorem \ref{Thm:JKS} are true for $q$. If $q=2$, then $2\mid a$ and we see that $2\nmid a_2$ and $2\nmid (a_2^2-b_1^2)$ since $b_1=0$. Hence, in particular, condition \eqref{JKS:I2} of Theorem \ref{Thm:JKS} is true, and so all conditions of Theorem \ref{Thm:JKS} are true. Finally, suppose that $q=2$ and $2\nmid D$. Then $2\nmid a$ and we examine condition \eqref{JKS:I4} of Theorem \ref{Thm:JKS}, where
\[ H_1(x):=x^2+ax-1 \quad \mbox{and}\quad H_2(x):=\dfrac{ax^4-1+\left(-ax+1\right)^4}{2}.\] Since $H_1(x)\equiv x^2+x+1 \pmod{2}$ is irreducible in $\F_2[x]$ and
\[H_2(x)\equiv \left\{\begin{array}{cl}
  x^2(x+1)^2 \pmod{2} & \mbox{if $a\equiv 1 \Mod{4}$}\\[1em]
  x^2 \pmod{2} & \mbox{if $a\equiv 3 \Mod{4}$,}\\
\end{array}\right.\] it follows that $H_1(x)$ and $H_2(x)$ are coprime in $\F_2[x]$, which establishes the claim. Moreover, this argument proves that these polynomials account for all monogenics $\FF(x)$ with $\Gal(\FF)\simeq$ 8T15.

Finally, with $a>0$, together with the restrictions on $a$ set forth above, we claim that the set $\S:=\{\FF(x)=x^8+ax^4-1\}$ is an infinite family of monogenic 8T15-octic trinomials. The additional restriction of $a>0$ is due to the fact that the octic field generated by $x^8-ax^4-1$ is isomorphic to the octic field generated by $x^8+ax^4-1$. Then, comparing discriminants using Maple, it is easy to confirm that the trinomials in $\S$ generate distinct octic fields.

\subsection*{X=16} Following Theorem \ref{Thm:Awtrey}, we assume that $b$ is not a square and $\Gal(\FF)\simeq$ 8T16, so that $b(a^2-4b)$ is a square. Let $d=\gcd(b,a^2-4b)$, and let $q$ be a prime divisor of $d$. Then $q\mid a$. If $q^2\mid b$, then condition \eqref{JKS:I1} of Theorem \ref{Thm:JKS} is false. Hence, we can assume that $q\mid \mid b$ so that $d$ is squarefree. Next, suppose that $q\ge 3$ is a prime dividing $\widehat{b}:=b/d$, so that $q\nmid a$. If $q^2\mid b$, then we see that condition \eqref{JKS:I3} of Theorem \ref{Thm:JKS} is false since $a_1=0$ and $b_2=b/q\equiv 0\pmod{q}$. Thus, we may assume that $\widehat{b}/2^{\nu_2(\widehat{b})}$ is squarefree. Next, suppose that $q\ge 3$ is a prime divisor of $\widehat{d}:=(a^2-4b)/d$. Then $q\nmid a$ and so $q\nmid 2ab$. Hence, if $q^2\mid \widehat{d}$, then condition \eqref{JKS:I5} of Theorem \ref{Thm:JKS} is false. Therefore, we can assume that $\widehat{d}/2^{\nu_2(\widehat{d})}$ is squarefree. Thus, since $b(a^2-4b)$ is a square,  it follows that
\begin{equation}\label{Eq:b and a^2-4b}
b=d\widehat{b}=2^{c}d\quad \mbox{and} \quad a^2-4b=d\widehat{d}=2^{e}d
\end{equation} for some nonegative integers $c$ and $e$ with $c\equiv e\pmod{2}$.

If $2\mid d$, then $2\mid a$ so that $2^2\mid (a^2-4b)$. Note then that $2^2\nmid b$, since $d$ is squarefree. Thus, $2\mid \mid b$ and $b=d$. Hence, since $b(a^2-4b)$ is a square, we have that $2^3\mid (a^2-4b)$, which implies that $2^4\mid a^2$. However, $2^4\nmid (a^2-4b)$ since $2\mid \mid b$. Thus, $2^3\mid \mid (a^2-4b)$.  Therefore, in this case, we have that $c=0$, $e=2$ and $a^2=8d$, which implies that $b=d=2$ and $a=\pm 4$. It is straightforward to verify that the two polynomials $\FF(x)=x^8\pm 4x^4+2$ are both monogenic with $\Gal(\FF)\simeq$ 8T16.

Suppose next that $2\nmid d$. There are three cases to consider:
\begin{enumerate}
  \item \label{Case1} $2\mid a$ and $2\nmid b$,
  \item \label{Case2} $2\nmid a$ and $2\mid b$,
   \item \label{Case3} $2\nmid a$ and $2\nmid b$.
    \end{enumerate}

For case \eqref{Case1}, we have from \eqref{Eq:b and a^2-4b} that $b=d$ and
\begin{equation}\label{Eq:a^28T16}
a^2=2^2(2^{e-2}+1)d,
\end{equation} where $e\ge 2$. Observe that the right-hand side of \eqref{Eq:a^28T16} is not a square if $e=2$. If $e=3$ in \eqref{Eq:a^28T16}, then $d=3=b$,  $a=\pm 6$, and $b(a^2-4b)=72$, contradicting the fact that $b(a^2-4b)$ is a square. If $e\ge 4$ in \eqref{Eq:a^28T16}, then $4\nmid a$ and $b\equiv 1 \pmod{4}$. An examination of condition \eqref{JKS:I2} of Theorem \ref{Thm:JKS} reveals then that $2\nmid a_2$ and $2\nmid b_1$, so that $2\mid (a_2((-b)a_2^2-b_1^2)$, which implies that condition \eqref{JKS:I2} is false. Thus, there are no monogenic polynomials in this case.

For case \eqref{Case2}, we see that $2\nmid (a^2-4b)$. Hence, from \eqref{Eq:b and a^2-4b}, we have that $b=2^cd$, for some $c\ge 1$, and $a^2-4b=d$. Then, since $2\nmid d$ and $b(a^2-4b)=2^cd^2$ is a square, we deduce that $c\equiv  0\pmod{2}$ with $c\ge 2$. Thus, $2\mid b/2$, which implies that condition \eqref{JKS:I3} of Theorem \ref{Thm:JKS} is false, and there are no monogenic polynomials in this case.

Finally, for case \eqref{Case3}, we see from \eqref{Eq:b and a^2-4b} that $b=d=a^2-4b$. Hence, $a^2=5d$, which implies that $d=b=5$, since $d$ is squarefree,  and $a=\pm 5$. Since $2\nmid ab$, we examine condition \eqref{JKS:I4} of Theorem \ref{Thm:JKS} with $q=2$ for each of the two polynomials $\FF(x)=x^8\pm 5x^4+5$. For $\FF(x)=x^8+5x^4+5$, we get
\[H_1(x)\equiv x^2+x+1 \pmod{2} \quad \mbox{and} \quad H_2(x)\equiv (x^2+x+1)^2 \pmod{2},\] so that $H_1(x)$ and $H_2(x)$ are not coprime in $\F_2[x]$. Hence, $\FF(x)=x^8+5x^4+5$ is not monogenic. For $\FF(x)=x^8-5x^4+5$, we get
\[H_1(x)\equiv x^2+x+1 \pmod{2} \quad \mbox{and} \quad H_2(x)\equiv (x+1)^2 \pmod{2},\] so that $H_1(x)$ and $H_2(x)$ are coprime in $\F_2[x]$. Hence, $\FF(x)=x^8-5x^4+5$ is monogenic with $\Gal(\FF)\simeq$ 8T16.

Note that
\[\Delta(x^8-4x^4+2)=2^{31}=\Delta(x^8+4x^4+2),\]
and $\Delta(x^8-5x^4+5)=2^{16}5^7$, so that the octic field generated by $x^8-5x^4+5$ is distinct from the other two. All zeros of $x^8-4x^4+2$ are real, while all zeros of $x^8+4x^4+2$ are non-real, and Magma confirms that the respective octic fields are not isomorphic. Therefore, in summary, there are exactly three monogenic polynomials
\[\FF(x)\in \{x^8-4x^4+2,\quad x^8+4x^4+2,\quad x^8-5x^4+5\}\]
with $\Gal(\FF)\simeq$ 8T16.

\subsection*{X=17} Following Theorem \ref{Thm:Awtrey}, we assume that $b$ is not a square and $\Gal(\FF)\simeq$ 8T17, so that $b(a^2-4b)$ is not a square and $4b-a^2$ is a square. Note then that $a\ne 0$ since $b$ is not a square. Let $d=\gcd(b,4b-a^2)$. Suppose that $d>1$ and let $q$ be a prime divisor of $d$. Then $q\mid a$. If $q^2\mid b$, then condition \eqref{JKS:I1} of Theorem \ref{Thm:JKS} is false. Hence, we can assume that $q\mid \mid b$, so that $d$ is squarefree. Furthermore, if $q\ge 3$, then since $q^2\mid a^2$ and $q\mid \mid b$, it follows that $q^2\nmid (4b-a^2)$, contradicting the fact that $4b-a^2$ is a square. Thus, $d\in \{1,2\}$.

Since
 \[4b-a^2\equiv -a^2\pmod{4},\] and $4b-a^2$ is a square, it follows that $2\mid a$ and $b-(a/2)^2$ is a square.
 Suppose that $2\mid (b-(a/2)^2)$. If $2\mid (a/2)$, then $2^2\mid (a/2)^2$. But $2^2\nmid b$ so that $2^2\nmid (b-(a/2)^2)$, contradicting the fact that $b-(a/2)^2$ is a square. Thus, $2\nmid (a/2)$ and $2\nmid b$. Then, $(a/2)^2\equiv 1 \pmod{4}$, and since $2\mid (b-(a/2)^2)$ and $b-(a/2)^2$ is a square, we deduce that $2^2\mid (b-(a/2)^2)$ so that $b\equiv 1 \pmod{4}$. Then,
 \[b_1=\frac{b+(-b)^8}{2}\equiv 1 \pmod{4}\] and
 \[(-b)(a/2)^2-b_1^2\equiv 0\pmod{2},\] which implies that condition \eqref{JKS:I2} of Theorem \ref{Thm:JKS} is false. Hence, we can assume that $2\nmid (b-(a/2)^2)$, and therefore, $2^2\mid \mid (4b-a^2)$.

 Next, suppose that $q\ge 3$ is a prime divisor of $4b-a^2$. We have shown above that $q\nmid b$, and therefore $q\nmid a$. Thus $q\nmid 2ab$. However, since $4b-a^2$ is a square, we have that $q^2\mid (4b-a^2)$. Thus, condition \eqref{JKS:I5} of Theorem \ref{Thm:JKS} is false. Hence, for the monogenicity of $\FF(x)$, we must have that $4b-a^2=4$.

 Consequently, we have shown that if $\FF(x)=x^8+ax^4+b$ is monogenic with $\Gal(\FF)\simeq$ 8T17, then $a=2t$ and $b=t^2+1$, for some integer $t\ne 0$. In fact, with $t\ne 0$, we claim that
 \begin{equation}\label{Eq:Claim8T17}
 \mbox{$\FF(x)=x^8+2tx^4+t^2+1$ is monogenic if and only if $t^2+1$ is squarefree.}
  \end{equation} To establish \eqref{Eq:Claim8T17}, we begin by noting that $\FF(x)=x^8+2tx^4+t^2+1$ is irreducible by \cite[Lemma 3.4]{HJOkayama} if $t^2+1$ is squarefree, since
  \[(2t \mmod{4},\ t^2+1 \mmod{4})\in \{(0,1),(2,2)\}\] and $t^2+1\ge 2$. Note, by \eqref{Eq:DelF}, that
  \begin{equation}\label{Eq:Dis8T17}
  \Delta(\FF)=2^{24}(t^2+1)^3.
   \end{equation} Observe that if $q=2$ is a divisor of $t^2+1$, then, since $q\mid 2t$, condition \eqref{JKS:I1} of Theorem \ref{Thm:JKS} is false if and only if $4\mid (t^2+1)$. If $q=2$ does not divide $t^2+1$, then $t^2\equiv 0 \pmod{4}$, and in this case, we examine condition \eqref{JKS:I2} of Theorem \ref{Thm:JKS}. We see that
  \[a_2=t\equiv 0 \pmod{2}\quad \mbox{and} \quad b_1=\frac{t^2+1+(-(t^2+1))^8}{2}\equiv 1 \pmod{2}.\] Thus, condition \eqref{JKS:I2} of Theorem \ref{Thm:JKS} is true. Suppose next that $q\ge 3$ is a prime divisor of $t^2+1$. Then $q\nmid 2t$. Thus, we examine condition \eqref{JKS:I3} of Theorem \ref{Thm:JKS}. Since $q$ is odd, we see that $a_1=0$ and therefore, the second statement under condition \eqref{JKS:I3} is false. So, to determine whether condition \eqref{JKS:I3} of Theorem \ref{Thm:JKS} is true or false, we need to examine the first statement under condition \eqref{JKS:I3}. Thus, since $b_2=(t^2+1)/q$, we see that condition \eqref{JKS:I3} of Theorem \ref{Thm:JKS} is false if and only if $q^2\mid (t^2+1)$, which establishes the claim  \eqref{Eq:Claim8T17}.

  Since, $G(1)\not \equiv 0 \pmod{4}$ for $G(t)=t^2+1$, we conclude from Lemma \ref{Lem:ObstructionCheck} that $G(t)$ has no local obstructions, and therefore, from Corollary \ref{Cor:Squarefree}, we have that there exist infinitely many values of $t$ such that $G(t)$ is squarefree. Thus, we have shown that there exist infinitely many monogenic trinomials $\FF(x)=x^8+2tx^4+t^2+1$ with $\Gal(\FF)\simeq$ 8T17. Moreover, comparing discriminants \eqref{Eq:Dis8T17} for all such $t>0$, we see that the octic fields they generate are distinct.

  \subsection*{X=26}Computer evidence suggests that there is an abundance of monogenic polynomials $\FF(x)$ with $\Gal(\FF)\simeq$ 8T26. We use a strategy similar to the case of 8T9 to construct one infinite family. 
  Let
  $\FF(x)=x^8+tx^4+3$.
  By \eqref{Eq:DelF}, we have that
  \begin{equation}\label{Eq:Del8T26}
  \Delta(\FF)=2^{16}3^3(t^2-12)^4.
  \end{equation}
  Let $G(t)=t^2-12$. Since $G(1)\equiv 1 \pmod{4}$, we see from Lemma \ref{Lem:ObstructionCheck} that $G(t)$ has no local obstructions. Hence, by Corollary \ref{Cor:Squarefree}, there exist infinitely many primes $p$ such that $G(p)$ is squarefree. Let $p\ge 5$ be such a prime. We claim that
  \begin{equation}\label{Eq:Claim8T26}
  \mbox{$\FF(x)=x^8+px^4+3$ is monogenic with $\Gal(\FF)\simeq$ 8T26.}
  \end{equation}
Since
\[(p \Mod{4},\ 3 \Mod{4})\in \{(1,3),(3,3)\},\] we have that $\FF(x)$ is irreducible by \cite[Lemma 3.4]{HJOkayama}. With $a=p$ and $b=3$, we have that
\begin{align*}
b&=3,\\
b(a^2-4b)&=3(p^2-12)\not\equiv 0 \Mod{9},\\
4b-a^2&=-(p^2-12)<0,\\
-b&=-3<0 \quad \mbox{and}\\
-b(a^2-4b)&=-3(p^2-12)<0.
\end{align*} Hence, it follows from Theorem \ref{Thm:Awtrey} that $\Gal(\FF)\simeq$ 8T26.

To see that $\FF(x)$ is monogenic, first let $q=2$. Since $2\nmid p$, we examine condition \eqref{JKS:I4} of Theorem \ref{Thm:JKS}. Then
\begin{align*}
H_1(x)&\equiv x^2+x+1 \pmod{2} \quad \mbox{and}\\
H_2(x)&=p(p^2-p+1)\left(\dfrac{p+1}{2}\right)x^4+6p^3x^3+27p^2x^2+54px+42\\
&\equiv x^2\left(\overline{\left(\dfrac{p+1}{2}\right)}x+1\right)^2 \pmod{2}.
\end{align*} Hence, $H_1(x)$ and $H_2(x)$ are coprime in $\F_2[x]$.

Next, let $q=3$. We examine condition \eqref{JKS:I3} of Theorem \ref{Thm:JKS}. Since $a_1=0$ and $b_2=1$, we see that the first statement under condition \eqref{JKS:I3} is true.

Finally, let $q$ be a prime divisor of $p^2-12$. Then, $q\nmid 6p$, and we see that condition \eqref{JKS:I5} of Theorem \ref{Thm:JKS} is true since $p^2-12$ is squarefree.  Therefore, the claim \eqref{Eq:Claim8T26} is established.

Suppose now that $p_1$ and $p_2$ are primes with $5\le p_1<p_2$ such that $G(p_1)$ and $G(p_2)$ are squarefree, and that $\FF_{p_1}(x)$ and $\FF_{p_2}(x)$ generate isomorphic octic fields. Then, since  $\FF_{p_1}(x)$ and $\FF_{p_2}(x)$ are both monogenic, we must have that
  \begin{equation}\label{Eq:Soldis8T26}
  2^{16}3^3(p_1^2-12)^4=\Delta(\FF_{p_1})=\Delta(\FF_{p_2})=2^{16}3^3(p_2^2-12)^4,
  \end{equation} which contradicts the fact that $5\le p_1<p_2$. 
  We deduce that the set
  \[\{\FF(x)=x^8+px^4+3: p\ge 5 \mbox{ is prime with $p^2-12$ squarefree}\},\] is an infinite family of monogenic 8T26-octic trinomials $\FF(x)$, which completes the proof of item \eqref{I:1}.

  We turn now to the proof of item \eqref{I:2}. We recall the definition of $W_1$, $W_2$ and $W_3$ given in \eqref{Eq:Widef}.
Straightforward computations reveal that
\[\Delta(g)=(b+2-2a)(b+2+2a)(a^2-4b+8)^2=W_1W_2W_3^2 \quad \mbox{and}\quad \Delta(\GG)=2^8\Delta(g)^2.\]

 \subsection*{X=2} We assume that $\Gal(\GG)\simeq$ 8T2. Following Theorem \ref{Thm:AP1}, we see that there are three possibilities.

 Suppose first that $W_1W_3$ and $W_2W_3$ are squares, while
 $W_1W_2W_3$ is not a square. Then $W_1W_3W_2W_3=W_1W_2W_3^2$ is a square, which implies that $W_1W_2$ is a square. 
 Note that $W_1W_2\ne 0$ since $\GG(x)$ is irreducible. Furthermore, $W_1\ne W_2$, since if $W_1=W_2$, then $a=0$, which contradicts our assumption that $a\ne 0$
   from \eqref{Eq:G}. Hence, since $W_1W_2$ is a square, we conclude that $W_1W_2>1$ and that there exists a prime divisor $q$ of $W_1W_2$, such that
    $q^2\mid W_1$ or $q^2\mid W_2$. Hence, by Lemma \ref{Lem:MainG2}, it follows that $\GG(x)$ is not monogenic.

The second possibility in Theorem \ref{Thm:AP1} is that $W_1W_3$ and $W_1W_2W_3$ are squares, while
 $W_2W_3$ is not a square. Then $W_1W_3W_1W_2W_3=W_1^2W_2W_3^2$ is a square, which implies that $W_2$ is a square. 
  If $W_2>1$, then $q^2\mid W_2$ for some prime $q$, and $\GG(x)$ is not monogenic by Lemma \ref{Lem:MainG2}. Therefore, suppose then that $W_2=1$. Hence, $b=-2a-1$ and
  \[W_1W_3=-4a^3-31a^2-40a+12.\]
  Since $W_1W_3$ is a square, say $m^2$, we consider the elliptic curve
  \begin{equation}\label{E1:X=2}
  y^2=x^3-32x^2+160x+192,
    \end{equation} where $x:=-4a$ and $y:=4m$.
     Using Sage to find all integral points $(x,y)$, with $y\ge 0$, on \eqref{E1:X=2} yields
     \[(x,y)\in \{(-1,0), (4,20), (8,0), (24,0), (44,180)\}.\] Checking the points $(x,y)$ where $x\equiv 0\pmod{4}$ reveals that all such points, with the exception of  $(x,y)=(4,20)$, produce values $(a,b)$ such that $\GG(x)$ is reducible. The lone point $(4,20)$ implies that $(a,b)=(-1,1)$, and it is straightforward to verify that $\GG(x)=x^8-x^6+x^4-x^2+1$ is indeed monogenic.

   The third and final possibility in Theorem \ref{Thm:AP1} is that $W_2W_3$ and $W_1W_2W_3$ are squares, while
 $W_1W_3$ is not a square. Arguing as before, we see that $W_1$ is a square, and if $W_1>1$, we achieve no monogenic polynomials by Lemma \ref{Lem:MainG2}. If $W_1=1$, then we arrive at the same elliptic curve \eqref{E1:X=2}, where $x:=4a$ and $y:=4m$.
 In this case, the integral points with $x\equiv 0 \pmod{4}$ produce reducible polynomials $\GG(x)$, except when $(x,y)=(44,180)$. For this particular point, we get that $(a,b)=(11,21)$. However, is is easy to verify that, while the polynomial
  \[\GG(x)=x^8+11x^6+21x^4+11x^2+1\] is irreducible with $\Gal(\GG)\simeq$ 8T2, it
  is not monogenic.

 In conclusion, there is exactly one monogenic polynomial
 \[\GG(x)=\Phi_{10}(x^2)=x^8-x^6+x^4-x^2+1,\] where $\Gal(\GG)\simeq$ 8T2. 

 \subsection*{X=3} We assume that $\Gal(\GG)\simeq$ 8T3. Then, $W_1$, $W_2$ and $W_1W_2$ are all squares from Theorem \ref{Thm:AP1}. If $W_1=W_2$, then $a=0$, which contradicts the fact that $a\ne 0$ from \eqref{Eq:G}. Hence, without loss of generality, suppose that $W_1>1$. Then there exists a prime $q$ with $q^2\mid W_1$, and we can argue as in the case X=2 to deduce that no monogenic polynomials $\GG(x)$ exist in this case.

  \subsection*{X=4} We assume that $\Gal(\GG)\simeq$ 8T4. From Theorem \ref{Thm:AP2}, we have that exactly one of
\[W_1,\quad W_2 \quad \mbox{and} \quad W_1W_2 \quad \mbox{is a square,}\] and none of
\[W_1W_3,\quad W_2W_3 \quad \mbox{and} \quad W_1W_2W_3 \quad \mbox{is a square.}\] Suppose first that $W_1$ is a square. We deduce from Lemma \ref{Lem:MainG2} that $W_1=1$, so that $b=2a-1$. Then, it is easy to see that
  \[W_2\left(-a+4-2\sqrt{W_1}\right)=(4a+1)(-a+2)=-4a^2+7a+2\] is a square if and only if $a=2$. But then $W_3=a^2-4b+8=a^2-4=0$, which contradicts the fact that $W_1W_3=W_3$ is not a square. Also, it is easy to see that
    \[W_2\left(-a+4+2\sqrt{W_1}\right)=(4a+1)(-a+6)\] is a square if and only if $a\in \{1,2,6\}$. With the corresponding values of $b$, we observe that $W_3=0$ when $a\in \{2,6\}$, which again contradicts the fact that $W_1W_3$ is not a square. For $a=1$, we have that $\GG(x)=x^8+x^6+x^4+x^2+1$, which is reducible. Hence, there are no monogenics when $W_1$ is a square.

    The case when $W_2$ is a square is similar. We can assume that $W_2=1$ by Lemma \ref{Lem:MainG2}, so that $b=-2a-1$. Then, using Maple (the {\bf isolve} command), we see that
    \[W_1\left(-a-4-2\sqrt{W_2}\right)=(4a-1)(a+6)\] is a square if and only if $a\in \{-42,-11,-6\}$, while
    \[W_1\left(-a-4+2\sqrt{W_2}\right)=(4a-1)(a+2)\] is a square if and only if $a\in \{-6,-2,1\}$. Checking these values yields the contradictions that $W_3$ is a square when $a\in \{-6,-2\}$,  while $\GG(x)$ is reducible when $a\in \{-42,-11\}$. Although, $\GG(x)$ is irreducible when $a=1$, it is not monogenic. Thus, there are no monogenics arising in this situation.

    Finally, suppose that $W_1W_2$ is a square, and neither $W_1$ nor $W_2$ is a square. By Lemma \ref{Lem:MainG2}, we deduce that $W_1$ and $W_2$ are squarefree. Since $W_1W_2$ is a square, it follows that $W_1=W_2$, contradicting our assumption that $a\ne 0$ in \eqref{Eq:G}. Hence, there are no monogenic polynomials $\GG(x)$ when X=4.

\subsection*{X=9} Let
\[\GG_t(x):=x^8+(4t+3)x^6+(8t+5)x^4+(4t+3)x^2+1.\] Then
$\Delta(\GG)=256(16t+13)^2(4t+1)^4(4t-3)^4$. Let $G(t):=(16t+13)(4t+1)(4t-3)$. Since $G(1)\equiv 1 \pmod{4}$, we deduce from Lemma \ref{Lem:ObstructionCheck} that $G(t)$ has no local obstructions. Hence, from Corollary \ref{Cor:Squarefree}, there exist infinitely many primes $p$ such that $G(p)$ is squarefree. Let $p$ be such a prime. Note, for $\GG_p(x)$, with $a:=4p+3$, $b:=8p+5$ and $W_i$ as defined in \eqref{Eq:Widef}, we have
\[W_1=1,\quad W_2=16p+13 \quad \mbox{and} \quad W_3=(4p+1)(4p-3),\]
so that
\[W_1W_2W_3=(16p+13)(4p+1)(4p-3)=G(p).\] Since $W_1W_2W_3$ is squarefree and
 \[(a \mmod{4}, \ b \mmod{4})=(3,1),\] it follows from Proposition \ref{Lem:BAMS} that $\GG_p(x)$ is monogenic. Moreover, since $W_1=1$ is a square and $G(p)$ is squarefree, it is easy to see that
 none of
 \[W_2, \ W_1W_2, \ W_1W_3, \ W_2W_3,\  W_1W_2W_3,\]
 \begin{align*}
   W_2\left(-a+4-2\sqrt{W_1}\right)&=-(16p+3)(4p+1)\quad \mbox{and}\\
   W_2\left(-a+4+2\sqrt{W_1}\right)&=-(16p+3)(4p-3)
 \end{align*} is a square. Hence, we deduce from Theorem \ref{Thm:AP2} that $\Gal(\GG_p)\simeq$ 8T9.

  Suppose that, for some primes $p<q$, with $G(p)$ and $G(q)$ both squarefree, we have that $\GG_p(x)$ and $\GG_q(x)$ generate the same octic field. Then, since $\GG_p(x)$ and $\GG_q(x)$ are both monogenic, it must be that $\Delta(\GG_p)=\Delta(\GG_q)$, which implies that
\[256(16p+13)^2(4p+1)^4(4p-3)^8=256(16q+13)^2(4q+1)^4(4q-3)^4,\] contradicting the fact that $p<q$.
Thus, $\{\GG_p(x): G(p) \mbox{ is squarefree}\}$
is an infinite family of monogenic even octic 8T9-polynomials.

 \subsection*{X=10} We assume that $\Gal(\GG)\simeq$ 8T10. Then, following Theorem \ref{Thm:AP1}, we have that exactly one of
 \[W_1W_3, \quad W_2W_3 \quad \mbox{and} \quad W_1W_2W_3 \quad \mbox{is a square.}\] Note then that $W_1\ne 1$, $W_2\ne 1$ and  $W_1\ne W_2$.  Since we are searching for monogenic polynomials, we can also assume that $W_1$ and $W_2$ are squarefree, by Lemma \ref{Lem:MainG2}.

 Suppose first that
 \begin{align}\label{Eq:8T10 con1}
 \begin{split}
 W_1W_3& \quad \mbox{is a square}\\
 \mbox{while neither}\quad W_2W_3& \quad \mbox{nor} \quad W_1W_2W_3 \quad \mbox{is a square.}
 \end{split}
 \end{align}
 Note that $W_1\mid W_3$ since $W_1$ is squarefree and $W_1W_3$ is a square.
  If $W_1=W_3=-1$, then $a=4\pm \sqrt{-5}$, while if $W_1=W_3=1$, then $a=4\pm \sqrt{5}$. Hence, $W_1W_3>1$. Suppose that $q$ is a prime divisor of $W_3$ such that $q\nmid W_1$. Since $W_1W_3$ is a square, and $W_1$ is squarefree, it follows that $q^2\mid W_3$. Thus, by Lemma \ref{Lem:MainG2}, we can assume that $q=2$. Then, since $2\mid W_3$ and $2\nmid W_1$, it follows that $2\mid a$ and $2\nmid b$. Thus,
     \[(a \mmod{4}, \ b \mmod{4})\in \{(0,1),(0,3),(2,1),(2,3)\}.\] A closer examination reveals that $2^3\mid \mid W_3$ when $(a \mmod{4}, \ b \mmod{4})=(2,1)$, which contradicts the fact that $W_1W_3$ is a square. Invoking Lemma \ref{Lem:MainG2} once again narrows it down to $(a \mmod{4}, \ b \mmod{4})=(0,3)$. However, in this case, it is easy to see that $W_1\equiv 1 \pmod{4}$ and $W_3/4\equiv 3 \pmod{4}$ so that $W_1W_3/4\equiv 3 \pmod{4}$, which contradicts the fact that $W_1W_3/4$ is a square. Consequently, we have shown that the prime divisors of $W_3$ are exactly the prime divisors of $W_1$, and furthermore,
     \[\mbox{either } W_3=W_1\quad \mbox{or} \quad W_3=2^{2k}W_1\quad \mbox{for some integer $k\ge 1$}.\]

     If $W_3=W_1$, then $b=((a+1)^2+5)/5$ so that
     $W_1=(a-4)^2/5$. Since $W_1$ is squarefree, it follows that $a \in \{-1,9\}$. If $a=-1$, then $W_2=1$, which contradicts the fact that $W_2$ is squarefree. Thus, $a=9$, $b=21$ and $\GG(x)=x^8+9x^6+21x^4+9x^2+1$. We use Theorem \ref{Thm:Dedekind} with $T(x):=\GG(x)$ and $q=2$ to investigate the monogenicity of $\GG(x)$. Since $\overline{T}(x)=(x^4+x^3+x^2+x+1)^2$, we can let $h_1(x)=h_2(x)=x^4+x^3+x^2+x+1$. Then
     \begin{align*}
       F(x)&=\dfrac{(x^4+x^3+x^2+x+1)^2-(x^8+9x^6+21x^4+9x^2+1)}{2}\\
       &=x^7-3x^6+2x^5-8x^4+2x^3-3x^2+x\\
       &\equiv x^7+x^6+x^2+x \pmod{2}\\
       &\equiv x(x+1)^2(x^4+x^3+x^2+x+1).
     \end{align*}
     Hence, $\gcd(\overline{F},\overline{h_1})=x^4+x^3+x^2+x+1$ and $\GG(x)$ is not monogenic by Theorem \ref{Thm:Dedekind}. Thus, there are no monogenic 8T10-polynomials $\GG(x)$ when conditions \eqref{Eq:8T10 con1} hold with $W_1=W_3$.

     Suppose then that $W_3=2^{2k}W_1$ for some integer $k\ge 1$. Then
     \[b=\dfrac{a^2+2^{2k+1}a-2^{2k+1}+8}{2^{2k}+4},\]
     and since 
     $b$ is an integer, we see that $2\mid a$. Suppose that $k\ge 2$. Since $2^2\mid\mid (2^{2k}+4)$, it follows that
     \begin{equation}\label{Eq:bcon}
     b\equiv \left\{\begin{array}{cl}
       1 \pmod{2} & \mbox{if $a\equiv 2 \pmod{4}$}\\[.5em]
       0 \pmod{2} & \mbox{if $a\equiv 0 \pmod{4}$.}
     \end{array}\right.
     \end{equation} We use Theorem \ref{Thm:Dedekind} with $T(x):=\GG(x)$ and $q=2$ to examine the monogenicity of $\GG(x)$. From \eqref{Eq:bcon}, we get that
     \[\GG(x)\equiv \left\{\begin{array}{cl}
       (x^2+x+1)^4 \pmod{2} & \mbox{if $a\equiv 2 \pmod{4}$}\\[.5em]
       (x+1)^8 \pmod{2} & \mbox{if $a\equiv 0 \pmod{4}$.}
     \end{array}\right.\]

    Hence, if $a\equiv 2 \pmod{4}$, then $a^2+2^{2k+1}a-21\cdot 2^{2k}-68\equiv 0 \pmod{16}$, and therefore
    \begin{multline*}
       F(x)=2x^7+\left(\frac{10-a}{2}\right)x^6+8x^5-\left(\frac{a^2+2^{2k+1}a-21\cdot 2^{2k}-68}{2(2^{2k}+4)}\right)x^4\\
    +8x^3+\left(\frac{10-a}{2}\right)x^2+2x\equiv 0 \pmod{2}.
    \end{multline*}
    Thus, $\gcd(\overline{F},x^2+x+1)\ne 1$ and $\GG(x)$ is not monogenic.

    If $a\equiv 0 \pmod{4}$, then $a^2+2^{2k+1}a-72\cdot 2^{2k}-272\equiv 0 \pmod{16}$, and hence
    \begin{multline*}
      F(x)= \left(\frac{4^{k+1}+16}{2^{2k}+4}\right)x^7+\left(\frac{28-a}{2}\right)x^6+28x^5-\left(\frac{a^2+2^{2k+1}a-72\cdot 2^{2k}-272}{2(2^{2k}+4)}\right)x^4\\
    +28x^3+\left(\frac{28-a}{2}\right)x^2+ \left(\frac{4^{k+1}+16}{2^{2k}+4}\right)x\equiv 0 \pmod{2}.
    \end{multline*}
    Thus, $\gcd(\overline{F},x+1)\ne 1$ and $\GG(x)$ is not monogenic.

     Therefore, $\GG(x)$ is not monogenic when $k\ge 2$. So, suppose now that $k=1$. Then $b=a^2/8+a$ so that $4\mid a$. Then
     \[\GG(x)=x^8+ax^6+(a^2/8+a)x^4+ax^2+1\equiv (x+1)^8\pmod{2}.\] Hence, applying Theorem \ref{Thm:Dedekind} with $T(x):=\GG(x)$ and $q=2$, we get that
     \begin{multline*}
     F(x)=4x^7+\left(14-a/2\right)x^6+28x^5-\left(a^2/16+a/2-35\right)x^4\\
     +28x^3+\left(14-a/2\right)x^2+4x \equiv 0 \pmod{2}
     \end{multline*}
     if $2^2\mid \mid a$. Thus, suppose that $a=8m$, for some integer $m\ne 0$. Then $\overline{F}(x)=x^4$ and $\gcd(\overline{F},x+1)=1$.   Since $W_1=(a-4)^2/8=2(2m-1)^2$ is squarefree, it follows that $m=1$ so that $a=8$ and $b=16$. It is straightforward to confirm that \[\GG(x)=x^8+8x^6+16x^4+8x^2+1 \quad \mbox{with} \quad \Delta(\GG)=2^{24}27^2\]  is indeed monogenic with $\Gal(\GG)\simeq$ 8T10.

 The second case to examine is
 \begin{align}\label{Eq:8T10 con2}
 \begin{split}
 W_2W_3& \quad \mbox{is a square}\\
 \mbox{while neither}\quad W_1W_3& \quad \mbox{nor} \quad W_1W_2W_3 \quad \mbox{is a square.}
 \end{split}
 \end{align} Since this case is similar to the first case, we give only a summary of the results. In this case, we found exactly two monogenic 8T10-polynomials:
 \begin{align*}
  \GG(x)=x^8-9x^6+21x^4-9x^2+1 \quad & \mbox{with} \quad \Delta(\GG)=2^85^641^2 \quad \mbox{and}\\
   \GG(x)=x^8-8x^6+16x^4-8x^2+1 \quad & \mbox{with} \quad \Delta(\GG)=2^{24}17^2.
 \end{align*}

 The third, and final, case to examine is
 \begin{align}\label{Eq:8T10 con3}
 \begin{split}
 W_1W_2W_3& \quad \mbox{is a square}\\
 \mbox{while neither}\quad W_1W_3& \quad \mbox{nor} \quad W_2W_3 \quad \mbox{is a square.}
 \end{split}
 \end{align}
If $W_1W_2W_3=1$, then using Maple to solve this equation yields four solutions, all of which require
\[b^4-24b^3+216b^2-864b+1280=(b-8)(b-4)(b^2-12b+40)\] to be a square. Using Magma ({\bf QuarticIntegralPoints}([1,-24,216,-864,1280],[4,0]);), we see that the only integral points on the curve $y^2=b^4-24b^3+216b^2-864b+1280$ are $(b,y)\in \{(4,0),(8,0)\}$. However, each of these values of $b$ produces only irrational values for $a$. Hence, $W_1W_2W_3>1$ since $W_1W_2W_3$ is a square.

If $2\mid W_3$ then $2\mid a$, and so $4\mid W_3$. We claim that $2\nmid b$. Assume, by way of contradiction, that $2\mid b$. Note that if $b\equiv 2 \pmod{4}$, then $4\mid W_1$, contradicting the fact that $W_1$ is squarefree. Hence, $4\mid b$, $2\mid W_1$ and $2\mid W_2$. Thus, $16\mid W_1W_2W_3$, so that $M:=W_1W_2W_3/16$ is a square and
\begin{align*}
 M&=((b/2)+1-a)((b/2)+1+a)((a/2)^2+b-2)\\
  &\equiv \left\{\begin{array}{cl}
  (1)(1)(3)\equiv 3 \pmod{4} & \mbox{if $a\equiv 2 \pmod{4}$ and $b\equiv 4 \pmod{8}$,}\\
  (3)(3)(3)\equiv 3 \pmod{4} & \mbox{if $a\equiv 2 \pmod{4}$ and $b\equiv 0 \pmod{8}$,}\\
  (3)(3)(2)\equiv 2 \pmod{4} & \mbox{if $a\equiv 0 \pmod{4}$ and $b\equiv 4 \pmod{8}$,}\\
  (1)(1)(2)\equiv 2 \pmod{4} & \mbox{if $a\equiv 0 \pmod{4}$ and $b\equiv 0 \pmod{8}$,}\\
  \end{array}\right.
 \end{align*}  which is impossible in any case.   Thus, the claim that $2\nmid b$ is established, and therefore,
\[g(x)\equiv (x^2+x+1)^2 \pmod{2}.\] Note that $x^2+x+1$ is irreducible in $\F_2[x]$. Then, applying Theorem \ref{Thm:Dedekind} with $T(x):=g(x)$ and $q=2$, we can let
  \begin{equation}\label{Eq:h1h2}
  h_1(x)=h_2(x)=x^2+x+1,
  \end{equation} so that
   \begin{equation}\label{Eq:F}
   F(x)=-x\left(\left(\dfrac{a-2}{2}\right)x^2+\left(\dfrac{b-3}{2}\right)x+\left(\dfrac{a-2}{2}\right)\right).
   \end{equation} Next, we determine the possible values of $(a \mmod{4},\ b \mmod{4})$. Since $4\mid W_1W_2W_3$, then $N:=W_1W_2W_3/4$ is a square with
\begin{align}\label{Eq:N}
N&=\dfrac{(b+2-2a)(b+2+2a)(b^2-4b+8)}{4}\\
&=(b+2-2a)(b+2-2a)((a/2)^2-b+2).\nonumber
\end{align} Hence, since $2\mid a$ and $2\nmid b$, we see that $(b+2-2a)(b+2+2a)\equiv 1 \pmod{4}$ and
\[N\equiv (a/2)^2-b-2\equiv \left\{\begin{array}{cl}
-b-1 \pmod{4} & \mbox{if $a\equiv 2 \Mod{4}$}\\
-b-2 \pmod{4} & \mbox{if $a\equiv 0 \Mod{4}$.}
\end{array}\right.\]
Thus, it follows that
\begin{equation*}\label{Eq:b}
b\equiv \left\{\begin{array}{cl}
  3 \pmod{4} & \mbox{if $a\equiv 2 \Mod{4}$}\\
  1 \pmod{4} & \mbox{if $a\equiv 0 \Mod{4}$.}
\end{array}\right.
\end{equation*}
Hence, $g(x)$ and $\GG(x)$ are not monogenic by Lemma \ref{Lem:MainG2}.

Thus, we can assume additionally that $2\nmid a$ and $W_3$ is squarefree. We claim that $2\nmid b$. To see this, we proceed by way of contradiction and consider the two cases: $b\equiv 0 \pmod{4}$ and $b\equiv 2 \pmod{4}$. If $b\equiv 0 \pmod{4}$, then $W_1\equiv 0 \pmod{4}$, contradicting the fact that $W_1$ is squarefee. If $b\equiv 2 \pmod{4}$, then $4\mid W_1W_2W_3$ and $N$, as given in \eqref{Eq:N}, is a square with
 \[N=((b/2)+1-a)((b/2)+1+a)(a^2-4b+8)\equiv (2-a)(2+a)(1)\equiv -a^2\equiv 3 \pmod{4},\]
 which is impossible.  Hence, $2\nmid b$ and $W_1W_2W_3\not \equiv 0 \pmod{2}$.

 Note that $W_1W_2$ and $W_3$ are either both positive or both negative since $W_1W_2W_3$ is a square. If
 \[W_1W_2=(b+2)^2-4a^2<0 \quad \mbox{and}\quad W_3=a^2-4b+8<0,\] then
 \[(b+2)^2<4a^2<16b-32,\]
 which yields the contradiction
 \[(b+2)^2-16b+32=(b-6)^2<0.\]
 Hence,
 \begin{equation}\label{Eq:W1W2 W5 pos}
 W_1W_2>0\quad \mbox{and} \quad W_3>0.
 \end{equation}

 Let
 \[P=\gcd(W_1,W_3),\quad Q=\gcd(W_1,W_2)\quad \mbox{and}\quad R=\gcd(W_2,W_3).\]  Then,
  \begin{align}\label{Eq:PQR}
 \begin{split}
   \abs{W_1}&=\abs{b+2-2a}=PQ\\
   \abs{W_2}&=\abs{b+2+2a}=QR\\
  W_3&=a^2-4b+8=PR,
   \end{split}
 \end{align}
 where $PQR$ is squarefree. Thus, either $PQR=1$ or $PQR$ is the product of distinct odd primes.

 We claim that $Q\ne 1$. To see this, we assume that $Q=1$ and proceed by way of contradiction. Then, we deduce from  \eqref{Eq:W1W2 W5 pos} and \eqref{Eq:PQR} that
  \[(b+2)^2-4a^2=a^2-4b+8,\]
  so that
  \begin{equation}\label{Eq:a2}
  a^2=\dfrac{b^2+8b-4}{5}.
  \end{equation} Thus,
  \begin{equation}\label{Eq:W1W2SF}
  W_1W_2=(b+2)^2-4a^2=(b+2)^2-4\left(\dfrac{b^2+8b-4}{5}\right)=\dfrac{(b-6)^2}{5}.
  \end{equation} Since $Q=1$, it follows that $W_1W_2$ is squarefree. Hence, from \eqref{Eq:W1W2SF}, we conclude that $b\in \{1,11\}$. If $b=1$, then $a=\pm 1$ from \eqref{Eq:a2}. However, if $a=1$, we arrive at the contradiction that $W_1=1$, while if $a=-1$, we get the contradiction that $W_2=1$. If $b=11$, then $a^2=41$ from \eqref{Eq:a2}. Therefore, the claim is established, and $Q\ge 3$, since $Q$ is odd.

 Note that if $P=R\ne 1$, then $W_3=R^2>1$, which contradicts the fact that $W_3$ is squarefree.
 If $P=R=1$, then $W_1W_2=\abs{W_1W_2}=Q^2$, which contradicts the fact that $W_1W_2$ is not a square. Therefore, $P\ne R$. Similar arguments show that $P\ne Q$ and $Q\ne R$.

We proceed by providing details in the situation when $W_1>0$ and $W_2>0$. We omit details when $W_1<0$ and $W_2<0$ since the arguments are similar, and no new solutions arise. Invoking Maple to solve the system \eqref{Eq:PQR}, we get that
\begin{equation}\label{Eq:Main}
P^2Q^2-2PQ^2R+Q^2R^2-32PQ-32QR-16PR+256=0.
\end{equation}
It follows from \eqref{Eq:Main} that
\begin{equation}\label{Eq:Div}
P\mid (QR-16),\quad Q\mid (PR-16)\quad \mbox{and} \quad R\mid (PQ-16).
\end{equation}
  Thus, since $PQR$ is squarefree, we deduce from \eqref{Eq:Div} that $PQR$ divides
  \begin{align*}
  Z:&=\dfrac{(QR-16)(PR-16)(PQ-16)-PQR(PQR-16P-16Q-16R)}{256}\\
  &=PQ+QR+PR-16.
  \end{align*}
It is easy to see that $Z\ge 7$ since $P$, $Q$ and $R$ are distinct odd positive integers. Hence, since $PQR$ divides $Z$, we have that $H:=PQR-Z\le 0$.
On the other hand, using Maple, we see that the minimum value of $H$, subject to the constraints $\{P\ge 3,Q\ge 3,R\ge 3\}$, is 16. Thus, we deduce that $P=1$ or $R=1$. Letting $P=1$ in \eqref{Eq:Main}, and solving for $Q$ yields
\begin{equation}\label{Q}
Q=\dfrac{4(4R+4\pm \sqrt{R^3-2R^2+65R})}{(R-1)^2},
\end{equation}
while solving for $R$ produces
\begin{equation}\label{R}
R=\dfrac{Q^2+16Q+8\pm 4\sqrt{4Q^3+Q^2+16Q+4}}{Q^2}.
\end{equation} For $Q$ to be a viable solution, we conclude from \eqref{Q} that
\begin{equation}\label{Eq:EQ}
y^2=R^3-2R^2+65R,
\end{equation}
for some integer $y$. Using Sage to find all integral points (with $y\ge 0$) on the elliptic curve \eqref{Eq:EQ} we get
\[(R,y)\in \{(0,0),(1,8),(5,20),(13,52),(16,68),(45,300),(65,520),(1573,62348)\}.\] In \eqref{Q}, we cannot have $R=1$, and since $R\ge 1$ is odd and squarefree, we have that
$R\in \{5,13,65\}$. Plugging these values back into \eqref{Q} reveals only the two valid solution triples $(P,Q,R)\in \{(1,11,5), (1,3,13)\}$. Although the triple $(1,1,5)$ arises here, it is not legitimate since we know that $P\ne Q$. Because \eqref{Eq:Main} is symmetric in $P$ and $R$, we can also derive the two additional solutions $(5,11,1)$ and $(13,3,1)$. Using the same approach on \eqref{R} yields no additional solutions. Also, the substitution of $R=1$ into \eqref{Eq:Main} produces no additional solutions as well. In summary, the only solutions are given by
\begin{equation}\label{Sols}
(P,Q,R)\in \{(1,11,5), (1,3,13),(5,11,1), (13,3,1)\}.
\end{equation}
The corresponding values of $(a,b)$ can be found by using the values of $P$, $Q$ and $R$ in \eqref{Sols}, and solving the resulting systems in \eqref{Eq:PQR}. These pairs are
\[(a,b)\in \{(11,31),(9,19),(-11,31),(-9,19)\},\] and the corresponding polynomials are
\begin{align*}\label{Gi}
\begin{split}
\GG(x)&=x^8+11x^6+31x^4+11x^2+1 \quad \mbox{with $\Delta(\GG)=2^85^611^4$,}\\
\GG(x)&=x^8+9x^6+19x^4+9x^2+1\quad \mbox{with $\Delta(\GG)=2^83^413^6$,}\\
\GG(x)&=x^8-11x^6+31x^4-11x^2+1\quad \mbox{with $\Delta(\GG)=2^85^611^4$,}\\
\GG(x)&=x^8-9x^6+31x^4-9x^2+1\quad \mbox{with $\Delta(\GG)=2^83^413^6$.}
\end{split}
\end{align*}

In summary, we have shown that there exist exactly seven distinct monogenic polynomials $\GG(x)$ with $\Gal(\GG)\simeq$ 8T10 (see Table \ref{T:2}), and Magma confirms that these polynomials generate distinct octic fields.


\subsection*{X=18} Let
\[\GG_t(x):=x^8+3x^6+(2t+1)x^4+3x^2+1.\] Then
$\Delta(\GG_t)=256(2t+9)^2(2t-3)^2(8t-13)^4$. Let $G(t):=(2t+9)(2t-3)(8t-13)$. Since $G(1)\equiv 3 \pmod{4}$, we deduce from Lemma \ref{Lem:ObstructionCheck} that $G(t)$ has no local obstructions. Hence, from Corollary \ref{Cor:Squarefree}, there exist infinitely many primes $p$ such that $G(p)$ is squarefree. Let $p\ge 3$ be such a prime. 
Observe then for $\GG_p(x)$ we have
\begin{equation}\label{Eq:Wi8T18}
  W_1=2p-3, \quad W_2=2p+9 \quad \mbox{and} \quad W_3=-8p+13.
\end{equation}
Since $G(p)$ is squarefree, it follows that $\Gal(\GG)\simeq$ 8T18 from Theorem \ref{Thm:AP1}, and that $\GG(x)$ is monogenic from Lemma \ref{Lem:BAMS}.

  Suppose that, for some primes $3\le p<q$, with $G(p)$ and $G(q)$ both squarefree, we have that $\GG_p(x)$ and $\GG_q(x)$ generate the same octic field. Then, it must be that $\Delta(\GG_p)=\Delta(\GG_q)$, which implies that
\[256(2p+9)^2(2p-3)^2(8p-13)^4=256(2q+9)^2(2q-3)^2(8q-13)^4,\] contradicting the fact that $p<q$.
Thus, $\{\GG_p(x): G(p) \mbox{ is squarefree}\}$
is an infinite family of monogenic even octic 8T18-polynomials, which completes the proof of the theorem.
\end{proof}

We have summarized the results of Theorem \ref{Thm:Main} below in Tables \ref{T:2} and \ref{T:3}.\\

  \begin{table}[h]
 \begin{center}
\begin{tabular}{c|ccc}
 X & Distinct Monogenic 8TX-Trinomials $\FF(x)$& \#\\ [2pt]  \hline \\[-8pt]
 2& $x^8+1$&1\\
 3& $x^8-x^4+1$ & 1\\
 4& $x^8+3x^4+1$ & 1\\
 6& $x^8+2$ & 1\\
 8& $x^8-2x^4-1,\quad$ $x^8-2$ & 2\\
 9& $x^8+(4p+3)x^4+1$ & $\infty$\\
 &\mbox{where $p$ is prime with $(4p+1)(4p+5)$ squarefree} & \\
 11& none & 0\\
 15& $x^8-ax^4-1$ & $\infty$\\
 & \mbox{where $a>0$, $4\nmid a$ with $(a^2+4)/\gcd(a^2,4)$ squarefree} &\\
 16& $x^8-4x^4+2$, \ $x^8+4x^4+2$, \ $x^8-5x^4+5$ & 3\\
 17& $x^8+2tx^4+t^2+1$ & $\infty$\\
 & \mbox{where $t>0$ with $t^2+1$ squarefree} &\\
 22& none & 0\\
 26& $x^8+px^4+3$ & $\infty$\\
 & \mbox{where $p\ge 5$ is prime with $p^2-12$ squarefree} & \\
 \end{tabular}
\end{center}
\caption{Monogenic 8TX-even-trinomials with X$\in X_{\FF}$}
 \label{T:2}
\end{table}

  \begin{table}[h]
 \begin{center}
\begin{tabular}{c|ccc}
 X & Distinct Monogenic 8TX-Polynomials $\GG(x)$& \# \\ [2pt]  \hline \\[-8pt]
 2& $x^8-x^6+x^4-x^2+1$ & 1\\
 3& none & 0\\
 4& none & 0\\
 9& $x^8+(4p+3)x^6+(8p+5)x^4+(4p+3)x^2+1$&$\infty$ \\
 & \mbox{where $p$ is prime with $(16p+13)(4p+1)(4p-3)$ squarefree} & \\
 10& $x^8+8x^6+16x^4+8x^2$, & 7\\
 & $x^8-9x^6+21x^4-9x^2+1$, \quad  $x^8-8x^6+16x^4-8x^2$ & \\
 & $x^8+11x^6+31x^4+11x^2+1,\quad$ $x^8+9x^6+19x^4+9x^2+1$ & \\
 & $x^8-11x^6+31x^4-11x^2+1,\quad$ $x^8-9x^6+19x^4-9x^2+1$ &\\
 18& $x^8+3x^6+(2p+1)x^4+3x^2+1$ & $\infty$ \\
 & \mbox{where $p$ is prime with $(2p+9)(2p-3)(8p-13)$ squarefree} & \\
 \end{tabular}
\end{center}
\caption{Monogenic 8TX-even-reciprocal polynomials with X$\in X_{\GG}$}
 \label{T:3}
\end{table}




\newpage

\end{document}